\Crefname{assumption}{Assumption}{Assumptions}
\crefname{assumption}{Assumption}{Assumptions}
\crefname{theorem}{Theorem}{Theorems}
\Crefname{theorem}{Theorem}{Theorems}
\crefname{lemma}{Lemma}{Lemmas}
\Crefname{lemma}{Lemma}{Lemmas}
\crefname{proposition}{Proposition}{Propositions}
\Crefname{proposition}{Proposition}{Propositions}   
\newtheorem{theorem}{Theorem}[section]
\newtheorem{lemma}{Lemma}[section]
\newtheorem{proposition}{Proposition}[section]
\newtheorem{example}{Example}[section]
\newtheorem{remark}{Remark}[section]
\newtheorem{assumption}{Assumption}[section]
\newcommand\tsum{\textstyle\sum\nolimits}
\newcommand{\sU}{\mathscr{U}}
\newcommand{\cV}{{\mathfrak V}}
\newcommand{\sigmaprop}{\sigma_{t, \mathrm{prop}}}
\newcommand{\sigmacurr}{\sigma_{t, \mathrm{curr}}}
\def\argmin{\mathop{\rm arg\,min}}
\newcommand{\D}{{\cal D}}
\newcommand{\U}{{\cal U}}
\newcommand{\F}{{\cal F}}
\newcommand{\C}{{\cal C}}
\newcommand{\X}{{\cal X}}
\newcommand{\Y}{{\cal Y}}
\newcommand{\bx}{\boldsymbol{x}}
\newcommand{\w}{\omega}
\DeclareMathOperator{\cov}{Cov}
\DeclareMathOperator{\var}{Var}
\newcommand{\bbr}{\mathbb{R}} % real numbers
\newcommand{\bbe}{\mathbb{E}} % expectation
\newcommand{\dst}{\rightsquigarrow\,}
\numberwithin{equation}{section}
\begin{document}

\title{\bf Central Limit Theorems for Sample Average Approximations in Stochastic Optimal Control\thanks{\textbf{Funding}:
The research of the first author was supported by the National Science Foundation under Grant No.\ DMS-2410944.
The research of the second author was partially supported by Air Force Office of Scientific Research (AFOSR)
under Grant FA9550-22-1-0244.
}}
\author{{\bf Johannes Milz}\thanks{H.\ Milton Stewart School of Industrial and Systems Engineering, Georgia Institute of Technology, Atlanta, Georgia
30332, USA (\texttt{johannes.milz@isye.gatech.edu}, \texttt{ashapiro@isye.gatech.edu})}
\and  {\bf Alexander Shapiro}\footnotemark[2]
}

\maketitle

\begin{abstract}
We establish central limit theorems for the Sample Average
Approximation (SAA) method in discrete-time, finite-horizon stochastic
optimal control. Our analysis is based on an abstract limit theorem for
stochastic backward recursions, which yields a recursive
characterization of the limiting laws. Applied to the dynamic programming principle,
this framework gives Gaussian limits for SAA value functions under
unique optimal policies. The asymptotic variance at each stage
decomposes into a current-stage variance and a propagated future
variance, demonstrating how statistical uncertainty accumulates backward
through time. We also apply the framework to the linear quadratic
regulator, derive explicit limiting laws and variance formulas,
and provide numerical illustrations of the resulting variance
decomposition. Finally, we discuss the form of the limit laws under
nonunique optimal policies.
\end{abstract}

{\bf Keywords:} asymptotic distribution, Delta Theorem, stochastic optimal control, sample average approximation, dynamic programming

\section{Introduction}
\label{sec-intr}

An approach to solving  stochastic programming problems is to approximate  the ``true'' distribution of the corresponding  random   vector by  the  empirical distribution  based on a randomly generated sample. This approach became known as the Sample Average Approximation (SAA) method \cite{Kleywegt2002}. For multistage (linear) stochastic programming problems this approach was used in 
\cite{STCS2013} for a numerical study of planning   the Brazilian
interconnected power system.
For static (one-stage) stochastic programs   statistical inference of the SAA method is well developed (cf., \cite[Section 5.1]{SDR}). It is known that under mild regularity conditions the optimal value of the SAA problem asymptotically has a normal distribution, provided that the true problem has a unique optimal solution. On the other hand, little is known about asymptotics of the SAA method applied to multistage stochastic optimization problems. The main goal of this paper is to derive Central Limit Theorem (CLT)-type asymptotics for Stochastic Optimal Control  (SOC) in discrete
time.
One motivation for
studying such CLTs is the construction of confidence intervals to certify
termination of optimization methods such as \texttt{MSPPy} \cite{ding2019}.

\paragraph{Contributions}
The primary contributions of this paper are as follows:
\begin{enumerate}[nosep]
\item We develop an abstract statistical limit theorem for stochastic
backward recursions. This result provides a recursive description of the
limiting laws arising from stochastic recursions, and
serves as the main tool for our subsequent analysis of SAA in SOC\@. This result requires a
stochastic
equicontinuity-type condition.

\item We apply this abstract framework to discrete-time, finite-horizon
SOC and establish CLTs for SAA
value functions. The limiting Gaussian process at each stage admits a
recursive representation, and its asymptotic variance decomposes into a
\emph{current-stage variance} and a \emph{propagated future variance}.
This decomposition makes precise how sampling uncertainty is transmitted
backward through time.

\item We provide sufficient conditions for the stochastic
equicontinuity-type condition needed in the CLT analysis. 
One of these conditions is verified for an
inventory control problem.

\item We apply the abstract  framework to the linear
quadratic regulator (LQR) and derive explicit formulas for the limiting
Gaussian recursion and the associated variance decomposition. This
provides a transparent analytical and numerical illustration of the
current-stage and propagated variance components.

\item We discuss the form of the limit laws in the presence of
nonunique optimal policies. In that case, the directional derivatives of
the dynamic programming operators become nonlinear, and the resulting limit laws are
generally not Gaussian.
\end{enumerate}

\paragraph{Related work}

The sample complexity of multistage stochastic programming has been established in \cite{shapnem:05,Shapiro2006}. However, as   noted above, the literature on the limit distributions of SAA value functions and SAA  solutions and policies remains relatively sparse. For instance, \cite{Eichhorn2007} analyzes limit
distributions in the context of two-stage integer programming, but the results
do not appear to extend to multistage settings.
Limit theorems for infinite-horizon, discounted stochastic optimal control are discussed  in \cite{cltshapcheng21}.
More recently, \cite{Zhang2025} analyzes the asymptotic behavior of data-driven policies in a periodic review stochastic inventory control problem.

As we highlight, the dynamic programming recursions and their SAA
approximation underlying our analysis admit an interpretation within
the framework of Z-estimation. We refer the reader to
\cite{Vaart2023,Kosorok2008} for comprehensive treatments of this
theory.

\paragraph{Outline}
\Cref{sec:backward-recursion} develops an abstract limit theorem for
stochastic backward recursions. \Cref{sec-basic} introduces the
stochastic optimal control model and assumptions. \Cref{sec-cltsaa}
applies the framework to SAA Bellman recursions, establishes CLTs for
SAA value functions and optimal values, and derives the decomposition
into current-stage and propagated future variance, together with
sufficient conditions for stochastic equicontinuity. \Cref{sec:LQR}
applies the framework to LQR, and
\Cref{sec:lqr-numerical} presents numerical illustrations. \Cref{sec-summary}
provides concluding discussions. The appendix contains some technical proofs, and a
derivation of limit laws under nonunique optimal policies in \Cref{app-B}.

\paragraph{Notation}
We use the following notation.
We define $ [a]_+ \coloneqq \max\{0,a\}$.
For a process $\xi_1,\ldots$, we denote by $\xi_{[t]}=(\xi_1,\ldots,\xi_{t})$ its history. By `$\coloneqq$' we mean ``equal by definition".
 By `$\dst$' we denote convergence in distribution. By $\delta_\xi$ we denote the measure of mass one at $\xi$ (Dirac measure).
We denote by $\mathcal{N}(\mu, \Sigma)$ the normal distribution
with mean vector $\mu$ and covariance matrix $\Sigma$.
For a compact metric space $S$ we denote by
$C(S)$ the space of continuous functions $\phi:S\to \bbr$
  equipped   with the
supremum norm $\|\phi\|_\infty=\sup_{x\in S}|\phi(x)|$.
More generally, 
the space of continuous mappings between a  a compact metric space $S$
and a Banach space $\mathcal{Y}$ is denoted by $C(S, \mathcal{Y})$
and equipped with the standard supremum norm.
For two random variables $X$ and $Y$, we denote by $\bbe[X|Y]$ or $\bbe_{|Y}[X]$ the conditional expectation of $X$ given $Y$. 

Let $\mathcal{X}$ and $\mathcal{Y}$ be Banach spaces,
$\mathcal{D}\subset \mathcal{X}$, $\bar x\in \mathcal{D}$.
We say that a mapping  $\Phi\colon \mathcal{D}\to \mathcal{Y}$ is
Hadamard directionally differentiable at $\bar x$
tangentially to $\mathcal{D}$ if for all
sequences $\tau_k\in \bbr$ and $h_k\in \X$ such that 
 $\tau_k\downarrow 0$, $h_k\to h$ and  
$\bar x+\tau_k h_k\in \mathcal{D}$, it follows that the following limit exists 
(in  the norm topology of  $\mathcal{Y}$):
\begin{equation}
\label{derivhadamard}
\Phi'(\bar x; h) \coloneqq 
\lim_{k \to \infty}\frac{\Phi(\bar x+\tau_k h_k)-\Phi(\bar x)}{\tau_k}.
\end{equation}
Note that the conditions $\tau_k\downarrow 0$, $h_k\to h$ and  
$\bar x+\tau_k h_k\in \mathcal{D}$ mean that $h$ is an element of the so-called contingent cone $T_\D(\bar{x})$ to $\D$ at $\bar{x}$. We have that the cone $T_\D(\bar{x})\subset \X$ is closed and it follows from Hadamard directional differentiability that the directional derivative $\Phi'(\bar x; \cdot):T_\D(\bar{x})\to \Y$ is positively homogeneous and continuous. 
Moreover, if the limit  \eqref{derivhadamard} holds,  $T_\D(\bar{x})$ is a linear subspace of $\X$, and $\Phi'(\bar x; \cdot)$ is linear and bounded, 
then it is said that  $\Phi$ is
Hadamard   differentiable at $\bar x$
(tangentially to $\mathcal{D}$).
In our applications, $\mathcal{D}$ are affine subspaces, in which case
the contingent cone $T_\mathcal{D}(\bar x)$ is a linear subspace of $\mathcal{X}$.
Throughout the paper, we use the fact that if $\Phi$ is Lipschitz continuous and
directionally differentiable at $\bar x$, then $\Phi$ is Hadamard
directionally differentiable at $\bar x$ and $\Phi'(\bar x; \cdot)$ is Lipschitz continuous
(e.g.,\cite[Proposition 2.49]{BS2000}).

For a compact subset \(\mathcal{Y}_0\) of a metric space \(\mathcal{Y}\),
the $\varepsilon$-metric entropy 
\(\mathcal{H}(\varepsilon, \mathcal{Y}_0)\)
with $\varepsilon  >0$
is the natural logarithm of the \(\varepsilon\)-covering number of
\(\mathcal{Y}_0\)
(with respect to $\mathcal{Y}$).

\section{Statistical limit theorems for backward recursions}
\label{sec:backward-recursion}

We formulate an abstract backward recursion that is  applied to
the dynamic programming equations in later sections. For a   horizon 
  \(T \in \mathbb{N}\) and 
$t=1,\ldots,T+1$, let \(\mathcal{C}_t\) be a separable
Banach space with the respective norm $\|\cdot\|_{\mathcal{C}_t}$,  and let \(\mathcal{D}_t \subset \mathcal{C}_t\). 
Recall that a random element $X$ in $\mathcal{C}_t$ is a measurable mapping $X:\Omega\to \C_t$ from a probability space $(\Omega,\F,P)$ into the space $\C_t$ equipped with its Borel sigma-algebra.
A mapping
\(Y:\Omega\times\mathcal{D}_{t+1}\to\mathcal{C}_t\)
is called a jointly measurable random mapping if it is measurable with
respect to the product sigma-algebra on
\(\Omega\times\mathcal{D}_{t+1}\).
For every \(x\in\mathcal{D}_{t+1}\), the map
\(\omega\mapsto Y(\omega,x)\) is then a random element in
\(\mathcal{C}_t\).
To simplify notation, we suppress the variable \(\omega\) and write
\(Y:\mathcal{D}_{t+1}\to\mathcal{C}_t\), understanding that \(Y(x)\)
denotes the \(\mathcal{C}_t\)-valued random element
\(\omega\mapsto Y(\omega,x)\).

An important example is the space $\C_t\coloneqq C(S)$ of continuous functions defined on compact metric space $S$.   In that case the respective  random element can be viewed as a random function $X(\w,s)$.  It is said that the random element $X\in C(S)$ is Gaussian if for any $s_1,\ldots,s_m\in S$ the (finite dimensional)   distribution  of the random vector  $(X(\w, s_1),\ldots,X(\w,s_m))$ is Gaussian. 

For $t=1,\ldots,T$,  we consider a  sequence of mappings
$
\mathcal{B}_t \colon \mathcal{D}_{t+1} \to \mathcal{C}_t
$,
and let
\begin{equation}\label{random}
\hat{\mathcal{B}}_{t,N}:\mathcal{D}_{t+1}\to \mathcal{C}_t
\end{equation}
be a sequence of jointly measurable random mappings indexed by
\(N\in\mathbb{N}\). The mappings \(\hat{\mathcal{B}}_{t,N}\) are viewed
as stochastic approximations of \(\mathcal{B}_t\).
For a  (fixed)  terminal element \(W_{T+1} \in \mathcal{D}_{T+1}\), we consider 
the associated deterministic backward recursion   defined by
\[
W_t \coloneqq \mathcal{B}_t(W_{t+1}),
\qquad t=T,\ldots,1,
\]
and its random counterpart   defined by $\hat W_{T+1,N} \coloneqq W_{T+1}$ and
\[
\hat W_{t,N} \coloneqq \hat{\mathcal{B}}_{t,N}(\hat W_{t+1,N}),
\qquad t=T,\ldots,1.
\]

We derive, by backward induction, statistical limit theorems for \(N^{1/2}\big(\hat W_{t,N}-W_t\big)\).
We use the following  technical conditions allowing us to establish such limit laws. Recall that it is assumed that $\C_t$ are separable Banach spaces.

\begin{assumption}[{Statistical limit conditions for  backward recursions}]
~
\label{assume:general}
For  $t \in \{1, \ldots, T\}$ and $N \in \mathbb{N}$:
\begin{enumerate}[nosep,label=\textup{(\roman*)}]
\item The mapping $\mathcal{B}_t \colon \mathcal{D}_{t+1} \to \mathcal{C}_t$ 
is Borel measurable, and Hadamard directionally differentiable
at $W_{t+1}$ tangentially to $\mathcal{D}_{t+1}$,
and $\mathcal{B}_t\mathcal{D}_{t+1} \subset \mathcal{D}_t$.
\item 
The random mapping 
$\hat{\mathcal{B}}_{t,N} \colon \mathcal{D}_{t+1} \to \C_t$
is jointly measurable, independent of the random
element $\hat{W}_{t+1,N}$, and 
$\hat{\mathcal{B}}_{t,N}\mathcal{D}_{t+1} \subset \mathcal{D}_t$
with probability one.
\item There exists  a random  element $\mathfrak{H}_t$ 
 with values in $\mathcal{C}_t$
such that 
\begin{align}
\label{eq:general-intermediate-limit}
N^{1/2}
\big[
\hat{\mathcal{B}}_{t,N}- \mathcal{B}_t
\big]
(W_{t+1})
\dst \mathfrak{H}_t.
\end{align}

\end{enumerate}
\end{assumption}

\Cref{assume:general} implies
that with probability one, the random element $\hat{W}_{t+1,N}$ takes values in $\mathcal{D}_{t+1}$.
 \Cref{assume:general} implies (induction start) that for $W_{T+1} \in \mathcal{D}_{T+1}$ and $\mathfrak{G}_T \coloneqq \mathfrak{H}_T$:
\begin{align*}
N^{1/2}
\big[
\hat{W}_{T,N}- W_{T}
\big]
=
N^{1/2}
\big[
\hat{\mathcal{B}}_{T,N}- \mathcal{B}_{T}
\big]
(W_{T+1})
\dst \mathfrak{G}_T.
\end{align*}

Recall the definition \eqref{derivhadamard} of Hadamard differentiability.

\begin{theorem}[{Statistical limit theorems for backward recursions}]
\label{thm:general-clt}
Let $t \in \{1, \ldots, T\}$, and let \Cref{assume:general} hold.
Suppose that  $N^{1/2}(\hat{W}_{t+1,N} - W_{t+1}) \dst \mathfrak{G}_{t+1}$
in $\mathcal{C}_{t+1}$ \emph{(induction hypothesis)}, and that 
\begin{align}
\label{eq:general-continuity}
\big\| [\hat{\mathcal{B}}_{t,N}-
\mathcal{B}_{t} ](\hat{W}_{t+1,N})-
 [
\hat{\mathcal{B}}_{t,N}- \mathcal{B}_t
 ]
(W_{t+1})
\big \|_{\mathcal{C}_t}
= o_p(N^{-1/2}).
\end{align}
Then $N^{1/2}(\hat{W}_{t,N} - W_{t})$ converges in distribution  to a random element 
$\mathfrak{G}_{t}$ with values in $\mathcal{C}_t$, and the following recursion holds  
\begin{align}
\label{eq:general-law-recursion}
\mathfrak{G}_{t}=
\mathcal{B}_t^\prime(W_{t+1}; \mathfrak{G}_{t+1})
+
\mathfrak{H}_{t}.
\end{align}
Moreover, if   $\mathcal{B}_t$ is Hadamard differentiable at $W_{t+1}$, 
%$\mathcal{B}_t^\prime(W_{t+1}; \cdot)$ is linear, and bounded, 
and $\mathfrak{G}_{t+1}$ and $\mathfrak{H}_t$ are Gaussian,
then $\mathfrak{G}_{t}$ is Gaussian.
\end{theorem}

\begin{proof}
We consider the error decomposition
\begin{align}
\label{eq:general-error-decompostion}
\begin{aligned}
N^{1/2}(\hat{W}_{t,N} - W_{t})
& = \Big[N^{1/2}\big[
\hat{\mathcal{B}}_{t,N}
-
\mathcal{B}_{t}
\big](\hat{W}_{t+1,N})
-
N^{1/2}
\big[
\hat{\mathcal{B}}_{t,N}- \mathcal{B}_t
\big]
(W_{t+1})
\Big]
\\
& \quad +
N^{1/2}
\big[
\hat{\mathcal{B}}_{t,N}- \mathcal{B}_t
\big]
(W_{t+1})
+
N^{1/2}(\mathcal{B}_t \hat{W}_{t+1,N} - \mathcal{B}_t W_{t+1}).
\end{aligned}
\end{align}
The first term on the right-hand side of \eqref{eq:general-error-decompostion} 
converges to $0$ in probability as $N \to \infty$. 
The second and third terms are independent. 
Combined with \eqref{eq:general-intermediate-limit}, 
the Delta Theorem
(see, e.g., Theorem~9.74 in \cite{SDR}), 
the induction hypothesis, and Slutsky's theorem, we obtain the first two
assertions. 

Under the additional hypotheses, \eqref{eq:general-law-recursion} ensures
that $\mathfrak{G}_t$ is Gaussian.
\end{proof}%

We comment on the additional assumption \eqref{eq:general-continuity}.

\begin{remark}[{Stochastic equicontinuity-type condition}]
\normalfont
In the statement of \Cref{thm:general-clt}
we introduced the  new assumption  \eqref{eq:general-continuity}, which plays a key
role in the asymptotic analysis.
This condition involves the stochastic equicontinuity of the family of
operators
$
N^{1/2} \big[\hat{\mathcal{B}}_{t,N} - \mathcal{B}_t\big]
$
at $W_{t+1}$. For this reason, we refer to \eqref{eq:general-continuity}
as a \emph{stochastic equicontinuity-type condition}
(of
$
N^{1/2} \big[\hat{\mathcal{B}}_{t,N} - \mathcal{B}_t\big]
$
at $W_{t+1}$).
The circumstances under which this condition holds can be subtle,
as they depend on the interaction between empirical approximation,
operator structure, and the regularity of $W_{t+1}$.
We discuss simple sufficient conditions in \Cref{rem:Bellman-Lipschitz-1} below.
\hfill $\square$
\end{remark}

The backward recursion can be interpreted as a Z-estimation problem as 
discussed next.

\begin{remark}[{Relationship to Z-estimation}]
\normalfont 
Defining $\Psi_t(w_t, w_{t+1}) \coloneqq w_t  - \mathcal{B}_t w_{t+1}$
and its random counterpart
$\hat{\Psi}_{t,N}(w_t, w_{t+1}) \coloneqq w_t  - \hat{\mathcal{B}}_{t,N} w_{t+1}$,
the backward recursions become Z-estimation problems.
The stochastic equicontinuity-type condition \eqref{eq:general-continuity} 
is related to that in eq.~(3.3.2) in \cite{Vaart2023}.
$\hfill \square$
\end{remark}

We provide conditions sufficient for the
stochastic equicontinuity-type
condition \eqref{eq:general-continuity}.

\begin{remark}[{Sufficient conditions for the stochastic equicontinuity-type
condition}]
\label{rem:Bellman-Lipschitz-1}
\normalfont
We introduce two sufficient conditions for the stochastic
equicontinuity-type condition \eqref{eq:general-continuity} to hold.
\begin{enumerate}[nosep,label=\textup{(\roman*)}]
\item 
Since
$N^{1/2}(\hat{W}_{t+1,N} - W_{t+1}) \dst \mathfrak{G}_{t+1}$
implies that
$N^{1/2}\|\hat{W}_{t+1,N}-W_{t+1}\|_{_{\mathcal{C}_{t+1}}} = O_p(1)$,
a sufficient condition for \eqref{eq:general-continuity} to hold is
\begin{align}
\label{difference}
\big\|\big[\hat{\mathcal{B}}_{t,N}
-
\mathcal{B}_{t}\big](\hat{W}_{t+1,N})
-
\big[
\hat{\mathcal{B}}_{t,N}- \mathcal{B}_t
\big]
(W_{t+1})
\big\|_{_{\mathcal{C}_{t}}}
=
o_p(1)\|\hat{W}_{t+1,N}-W_{t+1}\|_{_{\mathcal{C}_{t+1}}}.
\end{align}
Condition \eqref{difference} imposes a Lipschitz-type bound on the difference of the
operators when applied to $\hat{W}_{t+1,N}$ and $W_{t+1}$, with a
Lipschitz constant that vanishes in probability.

\item Suppose that $\hat{\mathcal{B}}_{t,N}$ and $\mathcal{B}_t$ are
continuous, $\D_{t+1}$ is compact,
$\|\hat{W}_{t+1,N}-W_{t+1}\|_{_{\mathcal{C}_{t+1}}} = o_p(1)$, and
$
N^{1/2}
\big[
\hat{\mathcal{B}}_{t,N}- \mathcal{B}_t
\big]
$
converges in distribution to a random element in
$C(\mathcal{D}_{t+1}, \mathcal{C}_t)$.
Then \eqref{eq:general-continuity} holds true.
While this implication is essentially known
(cf.\ \cite[pp.~52--53]{Pollard1990}),
let us provide a concise verification.
We define
$
Z_N \coloneqq
N^{1/2}\big[\hat{\mathcal{B}}_{t,N}- \mathcal{B}_t\big]
$.
By assumption,
$Z_N \dst Z$
for some random element
$Z$ with values in $C(\mathcal{D}_{t+1}, \mathcal{C}_t)$.
Now define
$
\Upsilon \colon
C(\mathcal{D}_{t+1}, \mathcal{C}_t)\times \mathcal{D}_{t+1}
\to \mathcal{C}_t
$
by 
$\Upsilon(\upsilon,x)\coloneqq \upsilon(x)-\upsilon(W_{t+1}).$
Since $\Upsilon$ is continuous, the continuous mapping theorem 
and independence give
$
\Upsilon(Z_N,\hat{W}_{t+1,N})
\dst
\Upsilon(Z,W_{t+1}) = 0
$,
which yields \eqref{eq:general-continuity}.
\end{enumerate}
$\hfill \square$
\end{remark}

\Cref{thm:general-clt} can be generalized to allow for dependent
operators $\hat{\mathcal{B}}_{t,N}$.

\begin{remark}
\normalfont 
If  $\hat{\mathcal{B}}_{t,N}$ and $\hat{W}_{t+1, N}$
are dependent, then the assertion of \Cref{thm:general-clt} 
remains valid if $N^{1/2}(\hat{W}_{t+1,N} - W_{t+1}) \dst \mathfrak{G}_{t+1}$
in $\mathcal{C}_{t+1}$ 
and \eqref{eq:general-intermediate-limit} are replaced by the joint convergence
in distribution
\begin{align*}
N^{1/2}\big(
 [
\hat{\mathcal{B}}_{t,N}- \mathcal{B}_t]
(W_{t+1}),
\hat{W}_{t+1,N} - W_{t+1}
\big)
\dst
\big(\mathfrak{H}_t, 
\mathfrak{G}_{t+1} ).
\end{align*}
In this case, we obtain the convergence in distribution of the sum of the second
and third terms in \eqref{eq:general-error-decompostion} via an application 
of the Delta Method to the
mapping $\mathcal{C}_t \times \mathcal{D}_{t+1} \ni (u,w) \mapsto
u + \mathcal{B}_t(w)$.
$\hfill \square$
\end{remark}

\section{Stochastic optimal control in discrete time}
\label{sec-basic}

This section formulates the SOC problems considered throughout the
remainder of the manuscript.
We consider the    discrete time, finite horizon
SOC
 model (see, e.g., \cite{Bertsekas2005}):
\begin{equation}\label{soc}
\min\limits_{\pi\in \Pi}  \bbe^\pi\left [ \tsum_{t=1}^{T}
f_t(x_t,u_t,\xi_t)+f_{T+1}(x_{T+1})
\right],
\end{equation}
where $\Pi$ is the set of policies satisfying   the constraints
\begin{equation}\label{soc-b}
\Pi=\Big\{\pi=(\pi_1,\ldots,\pi_T):
u_t=\pi_t(x_t,\xi_{[t-1]}),
u_t\in \U_t, x_{t+1}=F_t(x_t,u_t,\xi_t),\;\;t=1,\ldots,T\Big\}.
\end{equation}
Here variables  $x_t\in \bbr^{n_t}$, $t=1,\ldots,T+1$, represent the state  of the system,   $u_t\in \bbr^{m_t}$,  $t=1,\ldots,T$, are controls,   $\xi_t\sim P_t$  are random vectors whose probability distribution $P_t$ is supported on a closed    subset $\Xi_t$ of $\bbr^{d_t}$, $f_t:\bbr^{n_t}\times\bbr^{m_t}\times\bbr^{d_t}\to \bbr$, $t=1,\ldots,T$, are cost functions,
$f_{T+1} \colon \bbr^{n_{T+1}} \to \bbr$
is the final cost function,
  $F_t:\bbr^{n_t}\times\bbr^{m_t}\times\bbr^{d_t}\to \bbr^{n_{t+1}}$ are (measurable) mappings, and $\U_t$  is a (nonempty)  subset of $\bbr^{m_t}$.
The values  $x_1$  and $\xi_0$ are  deterministic  (initial conditions); it is also possible to view $x_1$ as random with a given distribution;
however this is not essential for the following discussion.

\begin{assumption}
\label{ass-1}
{\rm (i)}
The probability distribution $P_t$ of $\xi_t$  does not depend on our decisions (on states and actions), $t=1,\ldots,T$.
{\rm (ii)}
The random process $\xi_1,\ldots,\xi_T$ is stagewise independent, i.e., the random vector  $\xi_{t+1}$ is independent of $\xi_{[t]}\coloneqq (\xi_1,\ldots,\xi_{t})$, $t=1,\ldots,T-1$.
\end{assumption}

The optimization in \eqref{soc}   is performed over policies  $\pi\in \Pi$  determined by  decisions  $u_t$ and state variables $x_t$ considered as  functions of $\xi_{[t-1]}=(\xi_1,\ldots,\xi_{t-1})$, $t=1,\ldots,T$,
and satisfying the feasibility constraints \eqref{soc-b}.
We also denote $\Xi_{[t]} \coloneqq \Xi_1 \times \cdots \times \Xi_t$.
  For the sake of simplicity, in order not to distract from the main message of the paper,  we assume that the control sets $\U_t$ do not depend on $x_t$.

\begin{remark}
\label{rem-pol}
{\rm
Under \Cref{ass-1}, and in particular by stagewise independence, it
suffices to consider policies of the form
$\{u_t=\pi_t(x_t)\}$, $t=1,\ldots,T$.

For a given policy $\pi\in \Pi$, the state variables in problem
\eqref{soc} are functions of $\xi_{[t-1]}$ and hence are random. On the
other hand, in some settings we use the same notation $x_t$ as a vector
in $\bbr^{n_t}$. To indicate when the state variables are viewed as
random, we use the bold face notation $\bx_t$.
}\hfill $\square$
\end{remark}

Suppose that at every stage $t=1,\ldots,T$, an
independent  and identically distributed
(iid) random sample $\xi_{ti}$, $i=1,2, \ldots$, of realizations\footnote{It is also possible to consider different sample sizes at different stages of the process. For the sake of simplicity we assume that the sample size $N$ is the same for every stage $t$. Usually  in numerical implementations the same sample size is used.} of the random vector $\xi_t\sim P_t$  is generated. 

\begin{assumption}
\label{ass-1'}
For each $t \in \{1,\ldots,T\}$, the random variables
$\xi_{t1}, \xi_{t2}, \ldots,$ are iid with common distribution $P_t$.
Moreover, the sample arrays
$
\{\xi_{ti}: i=1,2, \ldots\}
$,
$t=1,\ldots,T$,
are independent.
\end{assumption}

The SAA counterpart of problem \eqref{soc} is obtained by replacing the probability distributions $P_t$ with their empirical counterparts 
\begin{equation}\label{empirmes}
\hat{P}_{t,N}\coloneqq N^{-1}\tsum_{i=1}^N \delta_{\xi_{ti}}.
\end{equation}

The dynamic programming equations
for problem  \eqref{soc} are:   $V_{T+1}(x_{T+1})=f_{T+1}(x_{T+1})$, and
\begin{align}
\label{depen-1}
V_t(x_t) & =\inf_{u_t \in \U_t}
\bbe_{\xi_t\sim P_t}  \left [f_t(x_t,u_t,\xi_t)+
V_{t+1}\big(F_t(x_t,u_t,\xi_t) \big)\right],\;t=T,\ldots,1,
\end{align}
where the expectation is taken with respect to the probability distribution $P_t$ of $\xi_t$.
The SAA counterpart of  \eqref{depen-1} is:
$\hat{V}_{T+1,N}(x_{T+1})=f_{T+1}(x_{T+1})$, and 
\begin{align}
\label{depen-2}
\hat{V}_{t,N}(x_t) & =\inf_{u_t\in \U_t}\frac{1}{N}\sum_{i=1}^N
   \left [f_t(x_t,u_t,\xi_{ti})+
\hat{V}_{t+1,N}\big(F_t(x_t,u_t,\xi_{ti}) \big)\right],\;t=T,\ldots,1.
\end{align}
Note that under the considered assumptions, the value functions $V_t(\cdot)$ and 
$\hat{V}_{t,N}(\cdot)$ are continuous.

\section{Central limit theorems for nonparametric SAA value functions}
\label{sec-cltsaa}

This section develops CLTs for (nonparametric) SAA value functions
by applying the abstract  results of section \ref{sec:backward-recursion}, in particular  \Cref{thm:general-clt}. We first state the
assumptions used throughout the section and establish the terminal-stage CLT,
which serves as the base case for the backward induction.
  Our main result,
presented in \Cref{subsect:clt}, is an application of
\Cref{thm:general-clt} to the SAA dynamic programming recursion. It holds
under a stochastic equicontinuity-type condition comparing the SAA dynamic
programming operators with their true counterparts. 

\subsection{The terminal-stage CLT}

We begin by stating the assumptions used throughout this section and by
establishing the CLT at the terminal decision stage $t=T$. This result serves
as the base case for the backward induction developed in the next subsection.
At the terminal stage, the analysis becomes ``static'' and some known results can be applied in a straightforward way. 

\begin{assumption}
\label{ass-2}
For $t=1,\ldots,T$:
 {\rm   (i)} The sets $\X_t$, $\U_t$, and $\Xi_t$ are compact.
 {\rm   (ii)}  The functions $f_t:\X_t\times\U_t\times\Xi_t\to \bbr$
and $F_t:\X_t\times\U_t\times\Xi_t\to \X_{t+1}$, $t=1,\ldots,T$,   are continuous, and $f_{T+1} : \X_{T+1} \to \bbr$ is Lipschitz continuous.
 {\rm   (iii)}    There exists a nonnegative valued function $K_t(\xi_t)$
   such that $\bbe[K_t(\xi_t)^2]$ is finite, and
   for all $x_t,x'_t\in \X_t$,
   $u_t,u'_t\in \U_t$ and   $\xi_t\in \Xi_t$:
\begin{align*}
     |f_t(x_t,u_t,\xi_t)-f_t(x_t',u'_t,\xi_t)|& \le K_t(\xi_t)(\|x_t-x'_t\|+\|u_t-u'_t\|),\\
      \|F_t(x_t,u_t,\xi_t)-F_t(x_t',u'_t,\xi_t)\|&\le K_t(\xi_t)(\|x_t-x'_t\|+\|u_t-u'_t\|).
\end{align*}
   {\rm   (iv)}
   For every $x_t\in \X_t$, problem \eqref{depen-1} has
   a unique minimizer $u_t^*= \pi^*_t(x_t)$.
\end{assumption}

The compactness assumptions in \Cref{ass-2}(i) simplify the analysis in two
ways. First, the compactness of $\mathcal{U}_t$ ensures directional
differentiability of the infimum operator associated with the Bellman
recursion. Second, the compactness of $\mathcal{X}_t$, $\mathcal{U}_t$, and
$\Xi_t$, together with \Cref{ass-2}(iii), allows us to verify
\eqref{eq:general-intermediate-limit} using standard functional CLTs for the
associated empirical Lipschitz processes. Note that \Cref{ass-2}(i)--(iii) implies that the value functions $V_t(\cdot)$ and $\hat{V}_{t,N}(\cdot)$ are continuous. This follows from the continuity of the involved functions and compactness arguments (e.g., \cite[Proposition 4.4]{BS2000}).

Let us start by verifying \eqref{eq:general-intermediate-limit} at the last stage $t=T$.
We consider
   \begin{align}
   \label{PhiT}
    \Phi_T(x_T,u_T,\xi_T) & \coloneqq  f_T(x_T,u_T,\xi_T)+
f_{T+1} (F_T(x_T,u_T,\xi_T)  )\\
 \hat{\Phi}_{T,N}(x_T,u_T)
 & \coloneqq   N^{-1}\tsum_{i=1}^N \Phi_T(x_T,u_T,\xi_{Ti})=\bbe_{\xi_T \sim \hat{P}_{T,N}}[\Phi_{T}(x_T,u_T,\xi_T)],
 \end{align}
 where $\hat{P}_{T,N}$ is the empirical counterpart of $P_T$ (see
\eqref{empirmes}).

The next result verifies \eqref{eq:general-intermediate-limit} for $t=T$,
and computes the covariance function of the limiting law.

\begin{proposition}
\label{pr-1}
Suppose that for $t=T$,   
  \Cref{ass-2}  holds. Then 
$N^{1/2} (\hat{V}_{T,N} -V_{T}  )$ converges in distribution to a
Gaussian random element $\mathfrak{G}_T$ with values in $C(\X_T)$ with zero mean and covariance function 
\begin{equation}\label{covphi-a} \Gamma_T(x_T,x'_T)\coloneqq  \cov(\mathfrak{G}_T(x_T),\mathfrak{G}_T(x'_T)),
\end{equation}
 given by
\begin{equation}\label{covphi-1}
\Gamma_T(x_T,x'_T)=\cov\left(\Phi_T(x_T,\pi^*_T(x_T),\xi_T),
\Phi_T(x'_T,\pi^*_T(x'_T),\xi_T)\right).
\end{equation}
\end{proposition}

Our proof of \Cref{pr-1} is based on the following lemma, which we use
repeatedly throughout the manuscript. 

\begin{lemma}
\label{lem:Gt-Hadamard}
Suppose that \Cref{ass-2}\textup{(i)--(iii)}  holds, 
and let
  $t \in \{1, \ldots, T\}$.
 We consider the mapping $G_t \colon C(\mathcal{X}_t \times \mathcal{U}_t) \to C(\mathcal{X}_t)$ 
defined  by $[G_t(\phi)](x_t) \coloneqq \inf_{u_t \in \mathcal{U}_t} \phi(x_t, u_t)$.
Then $G_t$  
is Lipschitz continuous, and  directionally differentiable with 
   \begin{equation}\label{dirderiv}
 [ G_t'(\phi;\eta)](x_t)=\inf_{u_t\in \sU_t^*(\phi;x_t)}\eta(x_t,u_t), 
\quad \eta\in C(\X_t\times \U_t),
 \end{equation}
 where $\sU_t^*(\phi;x_t)\coloneqq\argmin_{u_t\in \U_t}\phi(x_t,u_t)$. 
\end{lemma}

\begin{proof}
Lipschitz continuity of $G_t$ is a standard result. 
The directional differentiability
follows from an application of Danskin's theorem  
(see, e.g., \cite[Theorem 9.26]{SDR}).
\end{proof}

As it was already pointed out, Lipschitz continuity  and  directional  differentiability of $G_t$ 
imply its Hadamard directional  differentiability. 

\begin{proof}[{Proof of \Cref{pr-1}}]
The derivations are   similar to
  \cite{sha-91} (see also \cite[Theorem 5.7]{SDR}); we briefly outline the derivations.  
    By the functional CLT, we have under \Cref{ass-2} that $N^{1/2} (\hat{\Phi}_{T,N} -\bbe[\Phi_T] )$  converges in distribution to  a Gaussian random element in
$C(\X_T\times \U_T)$ with zero mean and  covariance function
$
\cov\left(\Phi_T(x_T,u_T,\xi_T),
\Phi_T(x'_T,u'_T,\xi_T)\right)
$; see, e.g., \cite[Example 19.7]{van2000}.

Now, we consider the mapping $G_T$ defined in \Cref{lem:Gt-Hadamard}. 
By \Cref{ass-2}(iv), the solution set  $\sU_T^*(\bbe[\Phi_T(\cdot,\cdot,\xi_T)];x_T)$ is a singleton $\{\pi^*_T(x_T)\}$  for every $x_T\in \X_T$. The asymptotics then follow by the Delta Theorem  (see \cite[Theorem 9.74]{SDR}).
\end{proof}

In particular it follows that for a given $x_T\in \X_T$, $N^{1/2} \big(\hat{V}_{T,N} (x_T)-V_{T} (x_T) \big)$ converges in distribution to a mean zero Gaussian random variable
with variance
$\var[\Phi_T(x_T,\pi^*_T(x_T),\xi_T)]$.

\begin{remark}
\label{rem-nonunique}
{\rm
If the set $\sU_T^*(\phi;x_T)$ in \eqref{dirderiv} is not a singleton, we still can use \eqref{dirderiv} to conclude (cf., \cite[Theorem 5.7]{SDR}) that for any $x_T\in \X_T$, 
\begin{equation}\label{asymvalue}
  \hat{V}_{T,N}(x_T)=\inf_{u_T\in \U_T^*(x_T)}\hat{\Phi}_{T,N}(x_T,u_T)+o_p(N^{-1/2}),
\end{equation}
where
\begin{equation}\label{asymval-2}
 \U_T^*(x_T)\coloneqq\argmin_{u_T\in \U_T} 
\bbe_{\xi_T \sim P_T}  \left [f_T(x_T,u_T,\xi_T)+
f_{T+1}\big(F_T(x_T,u_T,\xi_T) \big)\right].
\end{equation}
Moreover, 
\eqref{asymvalue} holds uniformly in $x_T\in \X_T$. 
If the set $\U_T^*(x_T)$ is not a singleton, i.e., \Cref{ass-2}\textup{(iv)} does not hold, then by \eqref{asymvalue} we have that 
   $N^{1/2} \big(\hat{V}_{T,N} (x_T)-V_{T} (x_T) \big)$ may not be asymptotically normal. 
This highlights the relevance of \Cref{ass-2}(iv), about uniqueness of the minimizer $u_t^*= \pi^*_t(x_t)$, in obtaining the Gaussian limits. We will discuss this further, for general time periods, in Appendix \ref{app-B}. 
} $\hfill \square$
\end{remark}

\subsection{Central limit theorems for SAA value functions}
\label{subsect:clt}

Here, we develop the inductive step of the CLT for the SAA value functions. We present a recursive theorem that, under a stochastic equicontinuity-type assumption, constructs the limiting distribution at stage $t$ from the one at stage $t+1$. The resulting limit process is shown to be a sum of two independent Gaussian components: one representing the propagated uncertainty from future stages and another capturing the sampling error from the current stage.

We begin by defining the
dynamic programming operators and their empirical counterparts.
Let \Cref{ass-2} hold.
For \(t=1,\ldots,T\), we define the dynamic programming operator
\(\mathcal{T}_t \colon C(\mathcal{X}_{t+1}) \to  C(\mathcal{X}_{t})\)
by
\begin{equation}\label{saaderiv-1}
(\mathcal{T}_t V)(x_t) \coloneqq  \inf_{u_t\in \mathcal{U}_t}\, \mathbb{E}_{\xi_t\sim P_t}\bigl[f_t(x_t,u_t,\xi_t) + V\bigl(F_t(x_t,u_t,\xi_t)\bigr)\bigr],
\end{equation}
and  the SAA dynamic programming operator \(\hat{ \mathcal{T}}_{t,N} \colon C(\mathcal{X}_{t+1}) \to  C(\mathcal{X}_{t})\) by
\begin{equation}\label{saaderiv-2}
(\hat{\mathcal{T}}_{t,N} V)(x_t) \coloneqq  \inf_{u_t\in \mathcal{U}_t}\, \frac{1} {N} \sum_{i=1}^N \Bigl[f_t(x_t,u_t,\xi_{ti}) + V\bigl(F_t(x_t,u_t,\xi_{ti})\bigr)\Bigr].
\end{equation}
These operators are well-defined owing to \Cref{ass-2} and
\cite[Proposition~4.4]{BS2000}. Note that
$\hat{V}_{t,N}=\hat{\mathcal{T}}_{t,N}\hat{V}_{t+1,N}$.
We define
\begin{equation}\label{eq-vt}
 \hat{\mathcal{V}}_{t,N}\coloneqq  \hat{\mathcal{T}}_{t,N}V_{t+1},
\end{equation}
and 
$\Phi_t \colon \mathcal{X}_t \times \mathcal{U}_t \times \Xi_t \to \bbr$
and
$\Psi_{t+1} \colon \mathcal{X}_t \times \mathcal{U}_t \times
C(\mathcal{X}_{t+1}) \to \bbr$ by
\begin{align}
\label{Phi-t}
\Phi_t(x_t, u_t, \xi_t)
& \coloneqq
f_t(x_t,u_t,\xi_{t}) + V_{t+1}\bigl(F_t(x_t,u_t,\xi_{t})\bigr),
\\
\label{Psi-t}
\Psi_{t+1}(x_{t}, u_t, W_{t+1})
&\coloneqq
\mathbb{E}_{\xi_t\sim P_t}[W_{t+1}(F_t(x_t, u_t ,\xi_t))].
\end{align}
Of course for $t=T$, the above $\Phi_T$ coincides with the one defined in \eqref{PhiT}.

The following theorem establishes CLTs for SAA value functions through
induction backward in time. Notably, the CLT for $t=T$, the base case,
is provided by \Cref{pr-1}, which ensures that 
$N^{1/2}(\hat{V}_{T,N} - V_{T}) \dst \mathfrak{G}_{T}$,
where $\mathfrak{G}_{T}$ is a mean-zero Gaussian random
element in $C(\mathcal{X}_T)$. 
Recall that $\hat{\mathcal{T}}_{t,N}$ and $\hat{\mathcal{V}}_{t,N}$ are defined in
\eqref{saaderiv-2} and \eqref{eq-vt}, while $\Phi_t$ and $\Psi_{t+1}$ are
defined in \eqref{Phi-t} and \eqref{Psi-t}, respectively.
\begin{theorem}
\label{thm:clt}
Suppose that 
 \Cref{ass-1,ass-1',ass-2} hold. 
Let $t \in \{1, \ldots, T\}$.
Suppose that $N^{1/2}(\hat{V}_{t+1,N} - V_{t+1}) \dst \mathfrak{G}_{t+1}$,
where $\mathfrak{G}_{t+1}$ is a mean-zero Gaussian process in $C(\mathcal{X}_{t+1})$ \emph{(induction hypothesis)}. Then
\begin{align}
\label{eq:clt-mathcalV}
N^{1/2}
(\hat{\mathcal{V}}_{t,N} - V_t)
\dst
\mathfrak{H}_t,
\end{align}
where $\mathfrak{H}_t$ is a mean-zero
Gaussian process in $C(\mathcal{X}_t)$
with covariance function
\begin{align}
\label{covHt}
(x_t, x_t^\prime)
\mapsto
\cov(
\Phi_t(x_t, \pi^*_t(x_t), \xi_t),
\Phi_t(x_t^\prime,  \pi^*_t(x_t^\prime), \xi_t)
).
\end{align}

Suppose further that
\begin{align}
\label{eq:continuity}
\big\|\big[\hat{\mathcal{T}}_{t,N}
-
\mathcal{T}_{t}\big](\hat{V}_{t+1,N})
-
\big[
\hat{\mathcal{T}}_{t,N}- \mathcal{T}_t
\big]
(V_{t+1})
\big \|_\infty
= o_p(N^{-1/2}).
\end{align}
Then $N^{1/2}(\hat{V}_{t,N} - V_{t})$ converges in distribution to mean-zero Gaussian process $\mathfrak{G}_{t}$ in  $C(\mathcal{X}_{t})$ with
\begin{align}
\label{eq:clt}
\mathfrak{G}_{t}(\cdot)=
\mathbb{E}_{\xi_t \sim P_t}
[
\mathfrak{G}_{t+1}(F_t(\cdot, \pi^*_t(\cdot), \xi_t))
]
+
\mathfrak{H}_{t}(\cdot),
\end{align}
and covariance function
\begin{align}
\label{eq:covariance-function}
\begin{aligned}
\Gamma_t(x_t, x_t^\prime)
 & =
\cov(
\Psi_{t+1}(x_{t}, \pi^*_t(x_t), \mathfrak{G}_{t+1}),
\Psi_{t+1}(x_{t}^\prime, \pi^*_t(x_t^\prime), \mathfrak{G}_{t+1})
)\\
& \quad +
\cov(
\Phi_t(x_t, \pi^*_t(x_t), \xi_t),
\Phi_t(x_t^\prime,  \pi^*_t(x_t^\prime), \xi_t)
).
\end{aligned}
\end{align}
\end{theorem}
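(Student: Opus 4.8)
The plan is to establish the two assertions in turn, reusing \Cref{pr-1} for the first and layering a functional Delta-method argument on top of it for the second. For \eqref{eq:clt-mathcalV} I would re-run the proof of \Cref{pr-1} verbatim, with $\Phi_T$ replaced by $\Phi_t$ and the final cost replaced by the deterministic true value function $V_{t+1}$. Since $\hat{\mathcal{V}}_{t,N}=\hat{\mathcal{T}}_{t,N}V_{t+1}$ depends only on the stage-$t$ sample $\{\xi_{ti}\}$ and $V_{t+1}$ is a fixed continuous function (continuity by \Cref{ass-2}(i),(ii)), the empirical object $\hat{\Phi}_{t,N}(x_t,u_t):=N^{-1}\sum_{i=1}^N\Phi_t(x_t,u_t,\xi_{ti})$ is an average of i.i.d.\ continuous functions on the compact set $\mathcal{X}_t\times\mathcal{U}_t$, whose defining class has a square-integrable modulus by \Cref{ass-2}(ii),(iii). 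The functional CLT (\cite[Example~19.7]{van2000}), composition with the inf-operator $G$ of \Cref{pr-1}, the singleton argmin $\pi_t$ from \Cref{ass-2}(iv), and the Delta theorem (\cite[Theorem~9.74]{SDR}) then give $N^{1/2}(\hat{\mathcal{V}}_{t,N}-V_t)\dst\mathfrak{H}_t$ with covariance \eqref{covHt}.

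For the second assertion I would begin from the exact decomposition
\[
\hat{V}_{t,N}-V_t=\hat{\mathcal{T}}_{t,N}\hat{V}_{t+1,N}-\mathcal{T}_tV_{t+1}=\underbrace{[\hat{\mathcal{T}}_{t,N}-\mathcal{T}_t]\hat{V}_{t+1,N}}_{A}+\underbrace{[\mathcal{T}_t\hat{V}_{t+1,N}-\mathcal{T}_tV_{t+1}]}_{B}.
\]
Condition \eqref{eq:continuity} rewrites $A=[\hat{\mathcal{T}}_{t,N}-\mathcal{T}_t]V_{t+1}+\co_p(N^{-1/2})=(\hat{\mathcal{V}}_{t,N}-V_t)+\co_p(N^{-1/2})$, using $\hat{\mathcal{V}}_{t,N}=\hat{\mathcal{T}}_{t,N}V_{t+1}$ and $V_t=\mathcal{T}_tV_{t+1}$. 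Hence
\[
N^{1/2}(\hat{V}_{t,N}-V_t)=N^{1/2}(\hat{\mathcal{V}}_{t,N}-V_t)+N^{1/2}(\mathcal{T}_t\hat{V}_{t+1,N}-\mathcal{T}_tV_{t+1})+\co_p(1),
\]
and the first summand converges to $\mathfrak{H}_t$ by the first part.

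To treat $B$ I would apply the functional Delta theorem to $\mathcal{T}_t$ at $V_{t+1}$ along the direction supplied by the induction hypothesis $N^{1/2}(\hat{V}_{t+1,N}-V_{t+1})\dst\mathfrak{G}_{t+1}$. Writing $\mathcal{T}_t=G\circ L$ with the bounded affine operator $L\colon C(\mathcal{X}_{t+1})\to C(\mathcal{X}_t\times\mathcal{U}_t)$, $(LV)(x_t,u_t)=\mathbb{E}_{\xi_t\sim P_t}[f_t(x_t,u_t,\xi_t)]+\Psi_{t+1}(x_t,u_t,V)$ (Fr\'echet differentiable, with linear part $\eta\mapsto\Psi_{t+1}(\cdot,\cdot,\eta)$), and $G$ the inf-operator of \Cref{pr-1}, the chain rule for Hadamard directional derivatives together with the singleton argmin $\{\pi_t(x_t)\}$ of $L(V_{t+1})(x_t,\cdot)=\mathbb{E}_{\xi_t\sim P_t}[\Phi_t(x_t,\cdot,\xi_t)]$ yields $\mathcal{T}_t'(V_{t+1};\eta)=G'(L(V_{t+1});\Psi_{t+1}(\cdot,\cdot,\eta))=\Psi_{t+1}(\cdot,\pi_t(\cdot),\eta)$, a continuous linear map. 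Thus $N^{1/2}B\dst\Psi_{t+1}(\cdot,\pi_t(\cdot),\mathfrak{G}_{t+1})=\mathbb{E}_{\xi_t\sim P_t}[\mathfrak{G}_{t+1}(F_t(\cdot,\pi_t(\cdot),\xi_t))]$.

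It remains to combine the two summands and to identify the covariance. The limit $\mathfrak{H}_t$ of the first is a functional of the stage-$t$ sample alone, whereas $B$ is a functional of $\hat{V}_{t+1,N}$, i.e., of the stage-$(\ge t+1)$ samples; by \Cref{ass-1}(ii) these are independent for every $N$, so marginal weak convergence upgrades to joint convergence toward the product of the two mean-zero Gaussian marginals, and adding them (continuous mapping/Slutsky) gives \eqref{eq:clt} while independence makes the covariance the sum in \eqref{eq:covariance-function}. I expect the crux to be twofold: first, verifying that $\mathcal{T}_t$ is genuinely \emph{Hadamard} directionally differentiable at $V_{t+1}$ with the stated derivative---upgrading the directional differentiability of $G$ via its global Lipschitz property, and checking $\Psi_{t+1}(\cdot,\pi_t(\cdot),\eta)\in C(\mathcal{X}_t)$ (which needs continuity of $\pi_t$, obtainable from Berge's maximum theorem under \Cref{ass-2}) so that the Delta theorem returns a bona fide $C(\mathcal{X}_t)$-valued limit; and second, the clean separation of current-stage and future randomness that turns the marginal limits into jointly independent ones---precisely where \Cref{ass-1}(ii) and the reduction afforded by \eqref{eq:continuity} do the real work.
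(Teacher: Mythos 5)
Your proposal is correct and takes essentially the same route as the paper: your two-term split $A+B$ combined with \eqref{eq:continuity} is algebraically the paper's own error decomposition, your chain-rule factorization $\mathcal{T}_t = G\circ L$ is exactly the paper's Hadamard-differentiability lemma for $\mathcal{T}_t$, and the independence of the stage-$t$ sample from the stage-$(t+1),\ldots,T$ samples is precisely how the paper upgrades the two marginal limits to joint convergence and sums the covariances. The only point to tighten is in the first part: re-running \Cref{pr-1} requires $V_{t+1}$ to be Lipschitz continuous (not merely continuous) so that the class $\{\Phi_t(x_t,u_t,\cdot)\}$ has a square-integrable Lipschitz modulus for the functional CLT, which indeed holds, as the paper notes by citing \cite{Bertsekas1975}.
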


\begin{remark}
\normalfont
Let us define the operator
$\mathcal{K}_t \colon C(\mathcal{X}_{t+1}) \to C(\mathcal{X}_t)$
by
\begin{equation}
\label{eq:def-K_t}
[\mathcal{K}_t W](x_t)
\coloneqq
\mathbb{E}_{\xi_t \sim P_t}
\big[
W(F_t(x_t,\pi^*_t(x_t),\xi_t))
\big].
\end{equation}
Under the hypotheses of \Cref{thm:clt},
$\mathcal{K}_t$ is linear and bounded. Hence, the recursion
in \eqref{eq:clt} may be written equivalently as
\begin{equation}\label{recurk}
\mathfrak{G}_t
=
\mathcal{K}_t \mathfrak{G}_{t+1}
+
\mathfrak{H}_t.
\end{equation}
This formulation helps interpret \eqref{eq:clt}: the operator
$\mathcal{K}_t$ propagates the next-stage fluctuation
$\mathfrak{G}_{t+1}$ backward to stage $t$ by averaging it over the
current-stage noise $\xi_t$ after composition with the state dynamics
under the optimal policy.
Moreover, the above recursion may be interpreted as a dynamic
programming principle for the limit laws.
\hfill $\square$
\end{remark}

We prepare our  proof of \Cref{thm:clt}.
 Our next result verifies that the Bellman operator $\mathcal{T}_t$
is Hadamard directionally differentiable and hence
verifies \Cref{assume:general}(i).
We define the action-value operator $\mathcal{Q}_t \colon C(\mathcal{X}_{t+1}) \to C(\mathcal{X}_t  \times \mathcal{U}_t)$ by
\begin{align}
\label{eq:q-function}
\begin{aligned}
[\mathcal Q_t(W)](x_t,u_t)
& \coloneqq
\mathbb E_{\xi_t \sim P_t}\big[
f_t(x_t,u_t,\xi_t)+W(F_t(x_t,u_t,\xi_t))
\big].
\end{aligned}
\end{align}

\begin{lemma}
\label{lem:composition-is-hadamard}
Let $t \in \{1, \ldots, T\}$.
If \Cref{ass-1,ass-2} hold, then
$\mathcal{Q}_t$ and  $\mathcal{T}_t$ are Hadamard 
differentiable at $V_{t+1}$, 
$[\mathcal{Q}_t'(V_{t+1}; W)](x_t, u_t) = \mathbb{E}_{\xi_t \sim P_t}
[W(F_t(x_t, u_t, \xi_t))]$, $W\in C(\X_{t+1})$, and
\begin{equation}\label{derivbell-1}
[\mathcal{T}_t^\prime(V_{t+1}; W)](x_t)
=
\mathbb{E}_{\xi_t \sim P_t}[W(F_t(x_t, \pi^*_t(x_t), \xi_t))],
\quad W\in C(\X_{t+1}).
\end{equation}
\end{lemma}

\begin{proof}
The mapping $\mathcal{T}_t$ and $\mathcal{Q}_t$ are Lipschitz continuous.
Since $\mathcal{Q}_t$ is affine and the linear term is bounded, 
it is Hadamard directionally differentiable at $V_{t+1}$
with $\mathcal{Q}_t'(V_{t+1}; W) = \mathbb{E}_{\xi_t \sim P_t}
[W(F_t(\cdot, \cdot, \xi_t))]$.
Moreover, 
$G_t \colon C(\mathcal{X}_t \times \mathcal{U}_t) \to
C(\mathcal{X}_t)$
defined by
$[G_t(\psi)](x_t) \coloneqq \inf_{u_t \in \mathcal{U}_t}\,
\psi(x_t, u_t)
$
from \Cref{lem:Gt-Hadamard}
is Lipschitz continuous and directionally differentiable
with derivative given in \eqref{dirderiv}.
Since $\mathcal{T}_t(V)  = G_t(\mathcal{Q}_t(V))$,
the chain rule  implies
that $\mathcal{T}_t$ is directionally differentiable at $V_{t+1}$
and provides the derivative formula.
\end{proof}

\begin{proof}[{Proof of \Cref{thm:clt}}]
We verify the hypotheses of \Cref{thm:general-clt}.
\Cref{lem:composition-is-hadamard} ensures that $\mathcal{T}_t$
is Hadamard  differentiable at $V_{t+1}$, 
and $\mathcal{T}_t$ and $\hat{\mathcal{T}}_{t,N}$ are Lipschitz continuous.
Since 
the value function $V_{t+1}$ is Lipschitz continuous
(cf., \cite[Proposition 1]{Bertsekas1975}),
the convergence in distribution in \eqref{eq:clt-mathcalV}
which corresponds to 
\eqref{eq:general-intermediate-limit} in  \Cref{thm:general-clt}, 
and the identity for the covariance function in \eqref{covHt}
 can be established similarly to \Cref{pr-1}.
Moreover, $\hat{\mathcal{T}}_{t,N}$ is independent of
$\hat{V}_{t+1,N}$.
Hence, \Cref{assume:general} holds true.
Now, an application of \Cref{thm:general-clt} yields \eqref{eq:clt}. 
The covariance identity \eqref{eq:covariance-function} follows
from \eqref{covHt} and \eqref{eq:clt}.
\end{proof}

We comment on the recursive structure of the asymptotic
variance of $N^{1/2}\big(\hat{V}_{t,N}(x_t) - V_t(x_t) \big)$.

\begin{remark}
\label{rem:inductive-clt}
{\rm
Consider the  covariance function $\Gamma_{t+1}$ of the process
$\mathfrak{G}_{t+1}$ from \eqref{eq:covariance-function}.
Since the covariance operator is bilinear, and the expectation
operator
is linear,
 we can write for all $x_t$, $x_t'\in \mathcal{X}_t$,
\begin{align*}
&\cov(
\Psi_{t+1}(x_{t}, \pi^*_t(x_t), \mathfrak{G}_{t+1}),
\Psi_{t+1}(x_{t}^\prime, \pi^*_t(x_t^\prime), \mathfrak{G}_{t+1})
)
\\
& =
\mathbb{E}_{(\xi_t, \xi_t^\prime) \sim P_t \times P_t}
[\Gamma_{t+1}(F_t(x_t, \pi^*_t(x_t), \xi_t), F_t(x_t^\prime,
\pi^*_t(x_t^\prime), \xi_t^\prime))
],
\end{align*}
where
$\Psi_{t+1}$ is defined in \eqref{Psi-t},
and $\xi_t'$ is independent of $\xi_t$.
In particular, this identity shows that
only the covariance operator of the limit process $\mathfrak{G}_{t+1}$
is needed for variance computations.
Therefore, the variance function of
the Gaussian process
$\mathfrak{G}_{t}$,
which is the asymptotic variance of
$N^{1/2}\big(\hat{V}_{t,N}(x_t) - V_t(x_t) \big)$,
 can be written as
\begin{align}
\label{asymvarin}
\begin{aligned}
\var[\mathfrak{G}_{t}(x_t)]
& =
\var\big[f_t(x_t,\pi^*_t(x_t),\xi_{t})+
V_{t+1} (F_t(x_t,\pi^*_t(x_t),\xi_{t}))\big]
\\
& \quad +
 \var\left(\mathbb{E}_{\xi_t\sim P_t}[\mathfrak{G}_{t+1}(F_t(x_t, \pi^*_t(x_t), \xi_t))]\right ),
\end{aligned}
\end{align}
where
\begin{align}
\label{varformula}
 \var\left(\mathbb{E}_{\xi_t\sim P_t}[\mathfrak{G}_{t+1}(F_t(x_t, \pi^*_t(x_t), \xi_t))]\right )
& = \mathbb{E}_{(\xi_t, \xi_t^\prime) \sim P_t \times P_t}
[\Gamma_{t+1}(F_t(x_t, \pi^*_t(x_t), \xi_t), F_t(x_t,
\pi^*_t(x_t), \xi_t^\prime))].
\end{align}
  The limit variance  in \eqref{asymvarin} consists of two components.
The first is the variance of the sum of the current stage cost and the
future cost-to-go function, where the variance is taken over the current noise
 $\xi_t \sim P_t$.
We refer to this term as \emph{current stage variance.}
The second component, given in \eqref{varformula},  propagates uncertainty
from time $t+1$ backward to time $t$.
We refer to the   term  \eqref{varformula} as the {\em propagated variance}.
 It is the variance, taken over
future randomness ($\xi_{t+1}, \xi_{t+2}, \dots, \xi_T$), of the conditional
expectation over the current noise $\xi_t$ of the future limit
process $\mathfrak{G}_{t+1}$. This limit process is
evaluated at $F_t(x_t, \pi^*_t(x_t), \xi_t)$.
This second variance term propagates  the limit
distribution of the error $N^{1/2} (\hat{V}_{t+1,N} -V_{t+1})$ from time
$t+1$ to $t$.
Crucially, this backward induction does
not require knowing the mean zero Gaussian limit distribution $\mathfrak{G}_{t+1}$, but only
its covariance function $\Gamma_{t+1}(x_{t+1}, x_{t+1}^\prime)$.
}$\hfill \square$
\end{remark}

\begin{remark}
\label{rem:G1}
\normalfont
Let  \Cref{ass-1,ass-1',ass-2} as well as
\eqref{eq:continuity} hold for all
$t \in \{1, \ldots, T-1\}$.
For $x_1 \in \mathcal{X}_1$,
we define $\boldsymbol{x}_1 \coloneqq x_1$.
For $t \in \{1, \ldots, T\}$, $\boldsymbol{x}_{t+1} = \boldsymbol{x}_{t+1}(\xi_{[t]})$ denotes the random state at time $t+1$ generated by the optimal policy, where
$\xi_{[t]}$ is the history of the random process. Recall that under \Cref{ass-1},   the state process corresponding to the optimal policy  can be considered as a function of the history of the random data process.
Iterating the recursion \eqref{eq:clt} yields for all $x_1 \in \mathcal{X}_1$, 
\begin{equation}\label{valuesum}
\mathfrak{G}_1(x_1)
=
\tsum_{t=1}^T
\mathbb{E}_{\xi_{[t-1]}\sim P_{[t-1]}}
\big[
\mathfrak{H}_t\big(\boldsymbol{x}_t(\xi_{[t-1]})\big) \mid \mathfrak{H}_t
\big].
\end{equation}
Recall that $\mathfrak{H}_t$ is the mean-zero Gaussian process in $C(\X_t)$  with covariance function \eqref{covHt}. 
$\hfill \square$
\end{remark}

\subsection{Central limit theorems for SAA optimal values}
\label{sec-saaoptval}

In this section, we prove that
$N^{1/2}(\hat{V}_{1,N}(x_1) - V_1(x_1))$ converges in distribution to a 
normal random variable,
and we derive an explicit formula for the asymptotic variance.
From \Cref{rem:G1}, we recall that $\boldsymbol{x}_{t+1} = \boldsymbol{x}_{t+1}(\xi_{[t]})$ denotes the random state at time $t+1$ generated by the optimal policy.
For $t \in \{1, \ldots, T\}$, we define 
the product probability distribution $P_{[t]} \coloneqq P_1 \times \cdots \times P_t$.

\begin{proposition}
\label{prop:CLT-SAA-optimal-value}
Let \Cref{ass-1,ass-1',ass-2} hold, and suppose that
\eqref{eq:continuity} is satisfied for all
$t \in \{1,\ldots,T-1\}$. 
If $x_1 \in \mathcal{X}_1$,
then
$
N^{1/2}\bigl(\hat V_{1,N}(x_1)-V_1(x_1)\bigr)
$
converges in distribution 
to a mean-zero Gaussian random variable 
$\mathfrak G_1(x_1)$ with variance
\begin{align}
\label{saaov-5'}
\var(\mathfrak G_1(x_1))
&=
\tsum_{t=1}^{T}
\var_{\xi_t \sim P_t}\Big(
\mathbb E_{\xi_{[t-1]} \sim P_{[t-1]}}
\Big[
f_t(\boldsymbol x_t,\pi^*_t(\boldsymbol x_t),\xi_t)
+
V_{t+1}(F_t(\boldsymbol x_t,\pi^*_t(\boldsymbol x_t),\xi_t))
\mid \xi_t \Big]
\Big).
\end{align}
\end{proposition}

Before we establish \Cref{prop:CLT-SAA-optimal-value}, let us comment
on the variance identity in \eqref{saaov-5'}. We define
\begin{align}\label{totalsum}
Z_t(\xi_1, \ldots, \xi_t) \coloneqq 
f_t(\boldsymbol x_t,\pi^*_t(\boldsymbol x_t),\xi_t)
+
V_{t+1}(F_t(\boldsymbol x_t,\pi^*_t(\boldsymbol x_t),\xi_t)).
\end{align}
In  \eqref{saaov-5'}, we first compute for  $\xi_t \in \Xi_t$,
the expectation of $Z_t(\xi_1, \ldots, \xi_t)$
with respect to $P_{[t-1]}$,
yielding a random variable which only depends on $\xi_t \in \Xi_t$.
In the second step, \eqref{saaov-5'} computes the variance of this
random variable. Finally, the variance terms are summed up.

\begin{proof}[Proof of \Cref{prop:CLT-SAA-optimal-value}]
 \Cref{thm:clt,pr-1} ensure
$
N^{1/2}\bigl(\hat V_{1,N}(x_1)-V_1(x_1)\bigr)
\dst \mathfrak G_1(x_1)
$,
and $\mathfrak G_1(x_1)$ is a mean-zero Gaussian random variable.
Using \Cref{rem:G1}, 
and the independence of $\mathfrak{H}_t$,
for $t=1, \ldots, T$, owing to \Cref{ass-1,ass-1'}, we have
\[
\var(\mathfrak G_1(x_1))
=
\tsum_{t=1}^T \var(Y_t),
\quad \text{where} \quad 
Y_t \coloneqq \mathbb{E}_{\xi_{[t-1]}\sim P_{[t-1]}}
\big[
\mathfrak{H}_t(\boldsymbol{x}_t)
\mid \mathfrak{H}_t
\big].
\]

Fix $t \in \{1,\ldots,T\}$. Let
$\xi_{[t-1]}' \sim P_{[t-1]}$ be an independent copy of
$\xi_{[t-1]}$, and let $\boldsymbol{x}'_t$ be the corresponding
state at stage $t$. Since $\mathfrak H_t$ is centered,
\begin{align*}
\var(Y_t)
&=
\mathbb E_{\xi_{[t-1]}\sim P_{[t-1]},\xi_{[t-1]}'\sim P_{[t-1]}}
\Big[
\cov_{\mathfrak H_t}\big(
\mathfrak H_t(\boldsymbol x_t),
\mathfrak H_t(\boldsymbol{x}'_t)
\big)
\Big].
\end{align*}
By \Cref{thm:clt,pr-1}, the covariance kernel of $\mathfrak H_t$ is
given by
\[
\cov_{\mathfrak H_t}\big(\mathfrak H_t(x),\mathfrak H_t(x')\big)
=
\cov_{\xi_t\sim P_t}\big(
\Phi_t(x,\pi^*_t(x),\xi_t),
\Phi_t(x',\pi^*_t(x'),\xi_t)
\big).
\]
Hence,
\begin{align*}
\var(Y_t)
&=
\mathbb E_{\xi_{[t-1]}\sim P_{[t-1]},\xi_{[t-1]}'\sim P_{[t-1]}}
\Big[
\cov_{\xi_t\sim P_t}\big(
\Phi_t(\boldsymbol x_t,\pi^*_t(\boldsymbol x_t),\xi_t),
\Phi_t(\boldsymbol{x}'_t,\pi^*_t(\boldsymbol{x}'_t),\xi_t)
\big)
\Big].
\end{align*}
Expanding this covariance, 
we obtain
\begin{align*}
& \cov_{\xi_t\sim P_t}\big(
\Phi_t(\boldsymbol x_t,\pi^*_t(\boldsymbol x_t),\xi_t),
\Phi_t(\boldsymbol{x}'_t,\pi^*_t(\boldsymbol{x}'_t),\xi_t)
\big)
\\
&\quad = 
\mathbb{E}_{\xi_t \sim P_t}[
\Phi_t(\boldsymbol x_t,\pi^*_t(\boldsymbol x_t),\xi_t)
\Phi_t(\boldsymbol{x}'_t,\pi^*_t(\boldsymbol{x}'_t),\xi_t)
\mid \xi_{[t-1]}, \xi_{[t-1]}'
]
\\
& \quad  \quad - 
\mathbb{E}_{\xi_t \sim P_t}[
\Phi_t(\boldsymbol x_t,\pi^*_t(\boldsymbol x_t),\xi_t)
\mid \xi_{[t-1]}, \xi_{[t-1]}'
]
\mathbb{E}_{\xi_t \sim P_t}[
\Phi_t(\boldsymbol{x}'_t,\pi^*_t(\boldsymbol{x}'_t),\xi_t)
\mid \xi_{[t-1]}, \xi_{[t-1]}'
].
\end{align*}
Using the fact that
$\xi_t$ is independent of $\xi_{[t-1]}$ and $\xi_{[t-1]}'$, we obtain
\begin{align*}
\var(Y_t)
&=
\var_{\xi_t\sim P_t}\big(
\mathbb E_{\xi_{[t-1]}\sim P_{[t-1]}}
\big[
\Phi_t(\boldsymbol x_t,\pi^*_t(\boldsymbol x_t),\xi_t) \mid \xi_t
\big]
\big).
\end{align*}
Summing over $t=1,\ldots,T$ proves \eqref{saaov-5'}.
\end{proof}

The variance representation in \eqref{saaov-5'} also admits a useful
comparison with the variance of the realized cumulative cost under the
optimal policy.

\begin{proposition}
\label{prop:sqrtT-scaling}
Under the hypotheses of \Cref{prop:CLT-SAA-optimal-value},  we have
\begin{align}
\label{eq:variance-sum}
\begin{aligned}
\var(\mathfrak{G}_1(x_1))
\leq 
\var\big(\tsum_{t=1}^{T} f_t(\bx_t,\pi^*_t(\bx_t),\xi_t)+f_{T+1}(\bx_{T+1})\big).
\end{aligned}
\end{align}
\end{proposition}

\begin{proof}
Fix $t\in\{1,\ldots,T\}$.
We define
$
\Delta_t \coloneqq \Delta_t(\xi_1, \ldots, \xi_{t}) \coloneqq f_t(\bx_t,\pi^*_t(\bx_t),\xi_t)+V_{t+1}(\bx_{t+1})-V_t(\bx_t)
$,
where we recall that  $\bx_t$ is a function of
$\xi_{[t-1]}$, but does not depend on $\xi_t$.
We have
\begin{align*}
\bbe_{\xi_{[t]}\sim P_{[t]}}\!\left[f_t(\bx_t,\pi^*_t(\bx_t),\xi_t)+V_{t+1}(\bx_{t+1})\mid \xi_{[t-1]}\right]
& = \bbe_{\xi_{[t]}\sim P_{[t]}}[V_t(\bx_t)\mid \xi_{[t-1]}] = V_t(\bx_t).
\end{align*}
Hence $\bbe_{\xi_{[t]}\sim P_{[t]}}[\Delta_t\mid \xi_{[t-1]}]=0$. For $s<t$,
$\bbe_{\xi_{[t]}\sim P_{[t]}}[\Delta_t\Delta_s]=\bbe_{\xi_{[t-1]}\sim P_{[t-1]}}\!\big[\Delta_s\,\bbe_{\xi_{t}\sim P_{t}}(\Delta_t\mid \xi_{[t-1]})\big]=0$.
Therefore, $\Delta_t$, $t=1, \ldots, T$, are uncorrelated.
Applying variances to the identity
\[
\tsum_{t=1}^{T} \Delta_t
=\sum_{t=1}^{T} f_t(\bx_t,\pi^*_t(\bx_t),\xi_t)+f_{T+1}(\bx_{T+1})-V_1(x_1),
\]
we obtain
\begin{align}
\label{eq:varianceofZt}
\tsum_{t=1}^{T} \var(\Delta_t)
=
\var\big(\tsum_{t=1}^{T} \Delta_t\big)
= 
\var\big(\tsum_{t=1}^{T} f_t(\bx_t,\pi^*_t(\bx_t),\xi_t)+f_{T+1}(\bx_{T+1})\big).
\end{align}

 By stagewise independence, $\xi_t$ is
independent of $\xi_{[t-1]}$. Therefore, the law of total variance
gives
\begin{align}
\label{eq:var-Zt-ltv}
\var(\Delta_t)
&=
\var_{\xi_t\sim P_t}\!\big(
\mathbb E_{\xi_{[t-1]}\sim P_{[t-1]}}[\Delta_t \mid \xi_t]
\big)
+
\mathbb E_{\xi_t\sim P_t}\!\big[
\var_{\xi_{[t-1]}\sim P_{[t-1]}}
\big(\Delta_t \mid \xi_t\big)
\big].
\end{align}
Next, note that
\[
\mathbb E_{\xi_{[t-1]}\sim P_{[t-1]}}[\Delta_t \mid \xi_t]
=
\mathbb E_{\xi_{[t-1]}\sim P_{[t-1]}}
\big[
f_t(\bx_t,\pi^*_t(\bx_t),\xi_t)+V_{t+1}(\bx_{t+1})
\mid \xi_t
\big]
-
\mathbb E_{\xi_{[t-1]}\sim P_{[t-1]}}[V_t(\bx_t)].
\]
The second term does not depend on $\xi_t$, and hence
\begin{align*}
\var_{\xi_t\sim P_t}\!\big(
\mathbb E_{\xi_{[t-1]}\sim P_{[t-1]}}[\Delta_t \mid \xi_t]
\big)
&=
\var_{\xi_t\sim P_t}\!\big(
\mathbb E_{\xi_{[t-1]}\sim P_{[t-1]}}
\big[
f_t(\bx_t,\pi^*_t(\bx_t),\xi_t)
+V_{t+1}(\bx_{t+1}) \mid \xi_t
\big]
\big).
\end{align*}
Combined with \eqref{saaov-5'}, \eqref{eq:varianceofZt}, and
\eqref{eq:var-Zt-ltv}, we obtain the
inequality in \eqref{eq:variance-sum}.
\end{proof}

The inequality in \eqref{eq:variance-sum} can be strict, as the
following example shows.

\begin{example}
\label{exam-contr}
\normalfont
Suppose that \Cref{ass-1,ass-1'} hold.
We provide an example where the inequality in 
\eqref{eq:variance-sum} is strict.
We consider  \(T=2\), 
$
\mathcal{U}_1=\mathcal{U}_2=\{0\}
$,
$
\mathcal{X}_1=\{0\}
$,
$
\mathcal{X}_2=\mathcal{X}_3=\{-1,1\}
$,
$
\Xi_1=\Xi_2=\{-1,1\}
$,
$x_1 = 0$, $F_1(x_1, u_1, \xi_1) = \xi_1$,
and $F_2(x_2, u_2, \xi_2) = \xi_2$,
where \(\xi_1\) and \(\xi_2\) are independent Rademacher random
variables. Moreover, let $a,b \in \mathbb{R}$. We define  the stage costs by
$f_1(x_1,u_1,\xi_1)\coloneqq 0$,
$f_2(x_2,u_2,\xi_2)\coloneqq a \xi_2 +  bx_2\xi_2$,
$f_3(x_3)\coloneqq 0$.
Then $x_2=F_1(x_1,u_1,\xi_1)=\xi_1$, and the realized total cost is
\[
f_1(x_1,\pi^*_1(x_1),\xi_1)
+
f_2(\boldsymbol{x}_2,\pi^*_2(\boldsymbol{x}_2),\xi_2)
+
f_3(\boldsymbol{x}_3)
=
a\xi_2 + b\xi_1\xi_2,
\]
and hence
\[
\var\big(
f_1(x_1,\pi^*_1(x_1),\xi_1)
+
f_2(\boldsymbol{x}_2,\pi^*_2(\boldsymbol{x}_2),\xi_2)
+
f_3(\boldsymbol{x}_3)
\big)
=
a^2 + b^2.
\]

The true value functions are given by
\[
V_3\equiv 0,
\quad
V_2(x_2)=\mathbb{E}[f_2(x_2,0,\xi_2)]
=
\mathbb{E}[a\xi_2 + b x_2\xi_2]
=
0,
\quad
\text{and} 
\quad 
V_1(x_1)=0.
\]
The stage-\(2\) SAA value function is
$
\hat V_{2,N}(x_2)
=
\tfrac{1}{N}\tsum_{i=1}^N f_2(x_2,0,\xi_{2i})
=
(a+bx_2)\bar{\xi}_{2,N}
$,
where $ \bar{\xi}_{2,N}\coloneqq \frac{1}{N}\sum_{i=1}^N \xi_{2i}$. At stage \(1\),
\[
\hat V_{1,N}(x_1)
=
\tfrac{1}{N}\tsum_{i=1}^N
\hat V_{2,N}\bigl(F_1(x_1,0,\xi_{1i})\bigr)
=
(a+b\bar{\xi}_{1,N})\bar{\xi}_{2,N},
\quad \text{where} \quad 
\bar{\xi}_{1,N}\coloneqq \tfrac{1}{N}\tsum_{i=1}^N \xi_{1i}.
\]
Hence
$
N^{1/2}\bigl(\hat V_{1,N}(x_1)-V_1(x_1)\bigr)
= aN^{1/2}\bar{\xi}_{2,N}
+ bN^{1/2}\bar{\xi}_{1,N} \bar{\xi}_{2,N}
\dst aZ
$,
where $Z \sim \mathcal{N}(0,1)$. 
Hence, the left-hand side in 
\eqref{eq:variance-sum} is $a^2$, 
while the right-hand side in \eqref{eq:variance-sum} 
is $a^2+b^2$. In particular, for $b \neq 0$, 
the inequality in \eqref{eq:variance-sum} is strict.
\hfill $\square$
\end{example}

\subsection{Sufficient conditions for the stochastic equicontinuity-type condition}
\label{subsect:equicontinuity}

We provide conditions that are sufficient to ensure the stochastic equicontinuity-type condition \eqref{eq:continuity}.

\subsubsection{Lipschitz continuity-type conditions}

Motivated by \Cref{rem:Bellman-Lipschitz-1}, 
this section  provides a Lipschitz continuity-type condition that is sufficient to ensure the stochastic equicontinuity-type condition \eqref{eq:continuity} holds.
Relatedly, the proof of
\cite[Corollary 2.2]{Newey1991} demonstrates that  Lipschitz continuity-type
conditions can  imply stochastic equicontinuity.

\begin{remark}
\label{lem:Bellman-Lipschitz-1}
\normalfont
Since $N^{1/2}(\hat{V}_{t+1,N} - V_{t+1}) \dst \mathfrak{G}_{t+1}$
implies that
$N^{1/2}\|\hat{V}_{t+1,N}-V_{t+1}\|_\infty = O_p(1)$,
a sufficient condition for \eqref{eq:continuity} to hold is given by
\begin{align*}
\big\|\big[\hat{\mathcal{T}}_{t,N}
-
\mathcal{T}_{t}\big](\hat{V}_{t+1,N})
-
\big[
\hat{\mathcal{T}}_{t,N}- \mathcal{T}_t
\big]
(V_{t+1})
\big\|_\infty
=
o_p(1) \|\hat{V}_{t+1,N}-V_{t+1}\|_\infty.
\end{align*}
See \Cref{rem:Bellman-Lipschitz-1} for further details.
$\hfill \square$
\end{remark}

\subsubsection{Empirical-process conditions for stochastic equicontinuity}
\label{subsubsect:Random-empirical-processes}

To verify the stochastic equicontinuity condition, it is convenient to
work one level before the minimization performed by the dynamic
programming operators, and study the associated
action-value operators directly. At that level, the difference between
the sample-based and population operators can be written as a random
empirical fluctuation indexed by states, controls, and value
functions. This makes it possible to apply standard empirical-process
tools, as we highlight below.

Recalling the definition of the action-value operator
$\mathcal{Q}_t$ in \eqref{eq:q-function}, we define its empirical
counterpart
$\hat{\mathcal Q}_{t,N} \colon C(\mathcal{X}_{t+1}) \to
C(\mathcal{X}_t \times \mathcal{U}_t)$ by
\begin{align}
\label{eq:hatq-function}
\begin{aligned}
[\hat{\mathcal Q}_{t,N}(W)](x_t,u_t)
& \coloneqq
\frac1N\sum_{i=1}^N
\big[
f_t(x_t,u_t,\xi_{ti})+W(F_t(x_t,u_t,\xi_{ti}))
\big].
\end{aligned}
\end{align}
For any measurable function $g \colon \Xi_t \to \mathbb{R}$, 
we define
\begin{align}
\label{eq:empirical-process-q}
P_t g \coloneqq \mathbb{E}_{\xi_t \sim P_t}[g(\xi_t)]
\quad \text{and}
\quad 
\hat{P}_{t, N} g
\coloneqq \mathbb{E}_{\xi_t \sim \hat{P}_{t,N}}[g(\xi_t)],
\end{align}
where $\hat{P}_{t,N}$ is given in \eqref{empirmes}.
For $(x_t, u_t, W) \in \mathcal{X}_t \times \mathcal{U}_t \times C(\mathcal{X}_{t+1})$, we define
\begin{align*}
g_{(x_t, u_t), W}(\xi_t) \coloneqq f_t(x_t, u_t, \xi_t) + W(F_t(x_t, u_t, \xi_t)).
\end{align*}
We have
$
[\mathcal Q_t(W)](x_t,u_t)=P_t g_{(x_t,u_t),W}
$
and
$
[\hat{\mathcal Q}_{t,N}(W)](x_t,u_t)=\hat P_{t,N} g_{(x_t,u_t),W}
$.
Now, we
consider the empirical process
$
\mathbb G_{t,N} \coloneqq  N^{1/2}(\hat{P}_{t,N} - P_t)
$
indexed by the function class 
\[
\{g_{(x_t, u_t), W} \colon (x_t, u_t) \in \mathcal{X}_t \times \mathcal{U}_t, 
W \in C(\mathcal{X}_{t+1})\}.
\]
Hence for all $(x_t, u_t) \in \mathcal{X}_t \times \mathcal{U}_t$, 
	\begin{align*} 
N^{1/2}\big( [\hat{\mathcal Q}_{t,N}-\mathcal Q_t](\hat V_{t+1,N}) - [\hat{\mathcal Q}_{t,N}-\mathcal Q_t](V_{t+1}) \big)(x_t,u_t) 
= \mathbb{G}_{t,N}\big(g_{(x_t, u_t), \hat{V}_{t+1,N}}-g_{(x_t, u_t), V_{t+1}}\big).
 \end{align*}
In particular,
\begin{equation}
\label{eq:q-equicontinuity}
\big\| \big[\hat{\mathcal{Q}}_{t,N} - \mathcal{Q}_{t}\big](\hat{V}_{t+1,N}) - \big[\hat{\mathcal{Q}}_{t,N}- \mathcal{Q}_t\big](V_{t+1}) \big\|_\infty = o_p(N^{-1/2}).
\end{equation}
is equivalent to
\begin{equation}
\label{eq:stochastic-equicontinuity-target}
\sup_{(x_t,u_t) \in \mathcal{X}_t \times \mathcal{U}_t}\big| \mathbb{G}_{t,N}\big(g_{(x_t,u_t), \hat{V}_{t+1,N}} - g_{(x_t,u_t), V_{t+1}}\big) \big|  = o_p(1).
\end{equation}

Section~3.13 in \cite{Vaart2023} derives sufficient conditions for
\eqref{eq:stochastic-equicontinuity-target} to hold,
and \eqref{eq:stochastic-equicontinuity-target} is an instance of \cite[eq.\ (3.13.2)]{Vaart2023}.

Let us first relate the 
stochastic equicontinuity-type conditions
\eqref{eq:continuity} and \eqref{eq:q-equicontinuity}.

\begin{proposition}
\label{lem:equicontinuity-equivalence}
Let $t \in \{1, \ldots, T\}$.
If  \Cref{ass-1,ass-1',ass-2} as well as $N^{1/2}(\hat{V}_{t+1,N} - V_{t+1}) \dst \mathfrak{G}_{t+1}$
hold, then the stochastic equicontinuity-type condition 
\eqref{eq:q-equicontinuity} implies  \eqref{eq:continuity}.
\end{proposition}

\begin{proof}
The proof of \Cref{lem:equicontinuity-equivalence} is provided in \Cref{sect:appendix-Random-empirical-processes}.
\end{proof}

Under \Cref{ass-2}, $f_{T+1}$ is bounded and Lipschitz continuous.
Let $\ell_{T+1}$ be its Lipschitz constant, and 
let $r_{T+1}$ be its supremum norm.
 \Cref{ass-2} also ensures 
\begin{align}
\label{eq:Bt}
e_t\coloneqq
\sup_{(x_t,u_t,\xi_t)\in
\X_t\times\U_t\times\Xi_t}
|f_t(x_t,u_t,\xi_t)| < \infty.
\end{align}

\begin{lemma}\label{LipschitzII}
Let \Cref{ass-1,ass-1',ass-2} hold.
For \(N \in \mathbb{N}\), let
$
\hat \ell_{T+1,N}\coloneqq \ell_{T+1},
$
and, for \(t=T,\ldots,1\),
\[
 \ell_t\coloneqq \mathbb E_{\xi_t\sim P_t}[K_t(\xi_t)](1+\ell_{t+1}),
\quad
\hat \ell_{t,N}\coloneqq \mathbb E_{\xi_t\sim \hat{P}_{t,N}}[K_t(\xi_t)](1+\hat \ell_{t+1,N}),
\quad \text{and} \quad r_t\coloneqq e_t+r_{t+1}.
\]
Then for \(t=1,\ldots,T\):
{\rm (i)} The value function \(V_t\) is Lipschitz continuous 
with Lipschitz constant $\ell_t$, 
and its supremum norm is bounded by $r_t$.
{\rm (ii)} The SAA value function \(\hat V_{t,N}\) is Lipschitz continuous
with Lipschitz constant $\hat{\ell}_{t,N}$, and
 its supremum norm is bounded by $r_t$.
{\rm (iii)} As $N \to \infty$,
with probability one,
$
\hat \ell_{t,N}\to \ell_t
$.
In particular, with probability one, for all sufficiently large \(N \in \mathbb{N}\),
\[
\hat \ell_{t,N}\le \ell_t+1.
\]
\end{lemma}
\begin{proof}
This follows from standard computations, and the strong Law of Large Numbers. 
\end{proof}

Let $\cV_{t+1} \subset C(\mathcal{X}_{t+1})$
be  the  set of functions that are 
 Lipschitz continuous with  Lipschitz constant $\ell_{t+1}+1$, and 
uniformly bounded by
$r_{t+1}$.

\begin{lemma}
\label{lem:SAA-value-function-inclusion}
If $t \in \{1, \ldots, T\}$, and \Cref{ass-1,ass-1',ass-2} hold,
then, with probability one, for all sufficiently large $N \in \mathbb{N}$, 
$\hat{V}_{t+1, N} \in \cV_{t+1}$.
\end{lemma}
\begin{proof}
An application of \Cref{LipschitzII} yields the assertion.
\end{proof}

Our next assumption ensures that the class $\cV_{t+1}$ has sufficiently
small metric entropy,
where we recall that  $\mathcal{H}(\varepsilon, \mathcal{Y}_0)$
denotes the $\varepsilon$-metric entropy number of $\mathcal{Y}_0$.

\begin{assumption}
\label{ass-4}
For $t=1, \ldots, T$,
$
\int_{0}^1 (\mathcal{H}(\varepsilon, \cV_{t}))^{1/2}
\, \mathrm{d} \varepsilon < \infty
$.
\end{assumption}

The next result shows that \Cref{ass-4} provides a convenient sufficient
condition for the stochastic equicontinuity-type condition
\eqref{eq:q-equicontinuity}.

\begin{proposition}
\label{lem:bellman-clt}
Let $t \in \{1, \ldots, T\}$.
Suppose that \Cref{ass-1,ass-1',ass-2,ass-4} hold. If $N^{1/2}(\hat{V}_{t+1,N} - V_{t+1}) \dst \mathfrak{G}_{t+1}$, then the stochastic equicontinuity condition \eqref{eq:q-equicontinuity} is satisfied.
\end{proposition}

\begin{proof}
We verify the hypotheses of \cite[Theorem 3.13.4]{Vaart2023}.
Using \eqref{eq:empirical-process-q}, 
for all $(x_t, u_t, \xi_t) \in \mathcal{X}_t \times \mathcal{U}_t \times \Xi_t$,
\begin{align*}
|
g_{(x_t, u_t), \hat{V}_{t+1,N}}(\xi_t)
- g_{(x_t, u_t), V_{t+1}}(\xi_t)
|
\leq 
\|\hat{V}_{t+1,N}-V_{t+1}\|_\infty.
\end{align*}
Combined with $N^{1/2}(\hat{V}_{t+1,N} - V_{t+1}) \dst \mathfrak{G}_{t+1}$,
\begin{align*}
\sup_{(x_t, u_t) \in \mathcal{X}_t \times \mathcal{U}_t}\, 
\mathbb{E}_{\xi_t \sim P_t}[
(g_{(x_t, u_t), \hat{V}_{t+1,N}}(\xi_t)
- g_{(x_t, u_t), V_{t+1}}(\xi_t)
)^2
]
= o_p(1),
\end{align*}
which corresponds to \cite[eq.\ (3.13.3)]{Vaart2023}.
Moreover, \Cref{lem:SAA-value-function-inclusion}
ensures with probability one, for all sufficiently large $N$, 
$\hat{V}_{t+1, N} \in \cV_{t+1}$.

Next, we show that
$
\mathcal{F}_t \coloneqq \{g_{(x_t, u_t), W} \colon (x_t, u_t) \in \mathcal{X}_t \times \mathcal{U}_t, 
W \in \cV_{t+1}\}
$
is a $P_t$-Donsker class. 
We refer the reader to \cite{Vaart2023} for the definition
of Donsker classes.
We note that 
for all $\xi_t \in \Xi_t$,
$
 \mathcal{X}_t \times \mathcal{U}_t \times \cV_{t+1} \ni (x_t, u_t, w_{t+1}) \mapsto g_{(x_t, u_t), w_{t+1}}(\xi_t)$
is Lipschitz continuous 
with Lipschitz constant
$
K_t(\xi_t)\big(2+\ell_{t+1}\big) + 1
$,
where
we equip 
$\mathcal{X}_t \times \mathcal{U}_t \times C(\mathcal{X}_{t+1})$
with the norm
$
\|(x_t, u_t, w_{t+1}) \|
\coloneqq
\|x_t\| + \|u_t\| + \|w_{t+1}\|_{\infty}
$,
and $\ell_{t+1}$ is defined in \Cref{LipschitzII}.
Under \Cref{ass-2}(iii), $K_t(\xi_t)$ is square integrable.

Using \Cref{ass-4}, together with \Cref{ass-2}, which ensures that $\mathcal{X}_t \subset \mathbb{R}^{n_t}$ and $\mathcal{U}_t \subset \mathbb{R}^{m_t}$ are compact, we obtain
\begin{align*}
\int_{0}^1 (\mathcal{H}(3\varepsilon,\mathcal{X}_{t} \times \mathcal{U}_t \times  \cV_{t+1}))^{1/2}
\, \mathrm{d} \varepsilon
& \leq
\int_{0}^1 (\mathcal{H}(\varepsilon, \cV_{t+1}))^{1/2}
\, \mathrm{d} \varepsilon
\\
& \quad +
\int_{0}^1 (\mathcal{H}(\varepsilon, \mathcal{X}_{t}))^{1/2}
\, \mathrm{d} \varepsilon
+
\int_{0}^1 (\mathcal{H}(\varepsilon, \mathcal{U}_t))^{1/2}
\, \mathrm{d} \varepsilon < \infty.
\end{align*}
Combined with \cite[Theorems 2.5.6 and 2.7.17]{Vaart2023}, 
$\mathcal{F}_t$ is $P_t$-Donsker.

Now,  \cite[Theorem~3.13.4]{Vaart2023}
ensures \eqref{eq:stochastic-equicontinuity-target},
which is equivalent  to \eqref{eq:q-equicontinuity}.
\end{proof}

\Cref{ass-4} is satisfied under certain conditions. We discuss two of
them next.

\begin{remark}
\label{rem:covering-numbers}
\normalfont
Let \Cref{ass-2} hold.
For $t = 1, \ldots, T$, let the state spaces $\mathcal{X}_t$ be convex
with nonempty interior.
{\rm (i)} If $n_t=1$,  $t = 1, \ldots, T$,
then
we have
(see, e.g.,  \cite[Theorem~2.7.1]{Vaart2023}),
\begin{align*}
\mathcal{H}(\varepsilon, \cV_{t})
\leq C(1/\varepsilon)^{n_{t}}
\quad \text{for all} \quad \varepsilon > 0,
\end{align*}
where $C>0$ is a constant independent of $\varepsilon$,
but possibly depending on $r_{t}$, $\ell_{t}$, and $n_t$.
We obtain that \Cref{ass-4} is satisfied.

{\rm (ii)}
If $n_t \in \{1,2\}$,
$V_t$ is convex and
with probability one, $\hat{V}_{t, N}$,
$N \in \mathbb{N}$,
are
convex,  $t = 1, \ldots, T$,
we can let every element in
$\cV_{t+1}$
be convex. We have
(see, e.g.,  \cite[Corollary~2.7.15]{Vaart2023}),
\begin{align*}
\mathcal{H}(\varepsilon, \cV_{t})
\leq C (1/\varepsilon)^{n_{t}/2}
\quad \text{for all} \quad \varepsilon > 0,
\end{align*}
where the constant $C>0$ is independent of $\varepsilon$,
but it may depend on $r_{t}$, $\ell_{t}$, and $n_t$.
We obtain that \Cref{ass-4} is satisfied.
$\hfill \square$
\end{remark}

We verify \Cref{ass-4}
for an inventory control problem.

\begin{example}[{Inventory control}]
\normalfont
We consider a version of the inventory control problem (cf., \cite[Section 4.2]{Bertsekas2005},\cite{zipkin}).
Specifically, we define the system dynamics and stage cost as follows:
\begin{align*}
F_t(x_t, u_t, \xi_t) \coloneqq x_t + u_t - \xi_t,
\quad \text{and} \quad
f_t(x_t, u_t, \xi_t) \coloneqq c_t u_t + \psi_t(x_t, u_t, \xi_t),
\end{align*}
where
$\psi_t(x_t, u_t, \xi_t)
\coloneqq
b_t[\xi_t - (x_t+u_t)]_+
+
h_t[x_t + u_t - \xi_t]_+$.
Moreover, $f_{T+1}(x_{T+1}) \coloneqq 0$.
Here,
$c_t$ denotes the per-unit ordering cost,
$b_t$ is the backordering cost,
$h_t \geq 0$ is the holding cost,
and we assume $b_t > c_t > 0$.

The control sets are given by
$\mathcal{U}_t \coloneqq [0, \overline{u}_t]$
with $\overline{u}_t \in (0,\infty)$
and the initial state satisfies $x_1 \in [0, \infty)$.
The distributions of the  disturbance variables $\xi_t$ are supported on bounded intervals
$\Xi_t \coloneqq  [0, \overline{\xi}_t]$, for example, they can be
  uniformly distributed  $\xi_t \sim \mathrm{Uniform}(\Xi_t)$.
This setup allows us to define the state spaces as
$
\mathcal{X}_t = \big[x_1 - \tsum_{s=1}^{t-1} \overline{\xi}_s, \,x_1 + \tsum_{s=1}^{t-1} \overline{u}_s\big].
$
These choices ensure that the compactness conditions on $\mathcal{U}_t$, $\mathcal{X}_t$, and $\Xi_t$, as required by \Cref{ass-2}(i), are satisfied.
Combined with  \Cref{rem:covering-numbers}, we find that \Cref{ass-4} holds.
\hfill $\square$
\end{example}

\section{Central limit theorems for linear quadratic regulator}
\label{sec:LQR}

In this section, we apply \Cref{thm:general-clt} to LQR,
which is an instance of the SOC problem introduced in \Cref{sec-basic}.
We consider the LQR problem
as described in \cite[Section 4.1]{Bertsekas2005}.
It is defined by
$n_t \coloneqq n$, $m_t \coloneqq m$, and $d_t \coloneqq n$,
$F_t(x_t, u_t, \xi_t) \coloneqq A_tx_t + B_tu_t +\xi_t$,
$f_t(x_t,u_t, \xi_t) \coloneqq x_t^\top Q_t x_t + u_t^\top R_t u_t$,
where $Q_t \in \mathbb{R}^{n \times n}$
and $R_t \in \mathbb{R}^{m \times m}$ are symmetric positive definite,
and $f_{T+1}(x_{T+1}) \coloneqq x_{T+1}^\top Q_{T+1} x_{T+1}$,
where $Q_{T+1} \in \mathbb{R}^{n \times n}$ is symmetric positive definite.
Moreover, $\xi_t$ has mean zero and a finite fourth moment.

According to \cite[Section 4.1]{Bertsekas2005}, we have
$V_{t}(x_t) = x_t^\top H_t x_t + q_t$, 
$t=1, \ldots, T+1$,
where
$q_t = q_{t+1} + \mathbb{E}[\xi_t^\top H_{t+1} \xi_t]$
with $q_{T+1} \coloneqq 0$, 
 $H_{T+1} \coloneqq  Q_{T+1}$,
and
$H_t$ ($t=T, \ldots, 1$) are symmetric positive definite matrices and 
solve the discrete-time Riccati equation
(see, e.g., \cite[eqns.\ (4.4)--(4.5)]{Bertsekas2005}).
Moreover, 
\begin{align*}
K_t \coloneqq R_t + B_t^\top H_{t+1} B_t, \quad 
L_t \coloneqq -K_t^{-1} B_t^\top H_{t+1} A_t,  \quad 
\text{and} \quad 
M_t \coloneqq A_t + B_t L_t.
\end{align*}

To apply \Cref{thm:general-clt} to LQR, we first define suitable function spaces. 
We define the weight function
\[
w(x) \coloneqq 1 + \|x\|_2^2,
\]
and the weighted space of continuous functions 
\[
C_w(\mathbb{R}^n)
\coloneqq
\left\{
v \in C(\mathbb{R}^n) :
\|v\|_{C_w(\mathbb{R}^n)} < \infty
\right\},
\quad \text{where} \quad 
\|v\|_{C_w(\mathbb{R}^n)}
\coloneqq
\sup_{x \in \mathbb{R}^n}
\frac{|v(x)|}{1 + \|x\|_2^2}.
\]
Now, we define the state space $\mathcal{C} \coloneqq \mathcal{C}_t \coloneqq 
\{v \in C_w(\mathbb{R}^n) \colon v \text{ is a quadratic polynomial}\}$, 
$t = 1, \ldots, T+1$.
Since $\mathcal{C}$  is a finite-dimensional subspace of $C_w(\mathbb{R}^n)$, it is a
separable Banach space. 
We define $\mathcal{D}_t \coloneqq \{v \in \mathcal{C} \colon \nabla^2 v(0) = 2 H_t\}$,
which is affine-isomorphic to $\mathbb{R}^n \times \mathbb{R}$.

Let us define the covariance matrix
$\Sigma_t\coloneqq \mathbb{E}_{\xi_t \sim P_t}[\xi_t \xi_t^\top]$.

\begin{lemma}
\label{lem:appendix-2}
If \Cref{ass-1,ass-1'} hold,
then for $t \in \{1, \ldots, T\}$,
\begin{align*}
N^{1/2}
\begin{bmatrix}
\frac{1}{N} \sum_{i=1}^N \xi_{ti} \\
\frac{1}{N} \sum_{i=1}^N \xi_{ti}^\top H_{t+1}\xi_{ti}
-
\mathbb{E}[\xi_{t}^\top H_{t+1}\xi_{t}]
\end{bmatrix}
\dst
\begin{bmatrix}
\vartheta_t \\
\zeta_t
\end{bmatrix}
\sim
\mathcal{N}\bigg(
0, \begin{bmatrix}
\Sigma_t& \mathbb{E}[(\xi_t^\top H_{t+1} \xi_t) \xi_t]  \\
(\mathbb{E}[(\xi_t^\top H_{t+1} \xi_t) \xi_t])^\top&
\var(\xi_t^\top H_{t+1} \xi_t)
\end{bmatrix}
\bigg).
\end{align*}
\end{lemma}

\begin{proof}
This follows from an application of the multivariate CLT.
\end{proof}

 \Cref{thm:general-clt} implies the following statement.
We define the total (asymptotic) variance $\sigma_{t,\mathrm{asym}}^2$ by
\begin{equation}
\label{eq:lqr:limit-variance}
\sigma_{t,\mathrm{asym}}^2(x_t)
\coloneqq
\var(\mathfrak{G}_t(x_t)),
\end{equation}
and
the propagated variance $\sigmaprop^2$
and current stage variance $\sigmacurr^2$ by
\begin{align}
\label{eq:lqr:prop-curr-variance}
\begin{aligned}
\sigmaprop^2(x_t)
&\coloneqq
\var(
\mathbb{E}_{\xi_t\sim P_t}[\mathfrak{G}_{t+1}(F_t(x_t, \pi^*_t(x_t) ,\xi_t))]
),
\\
\sigmacurr^2(x_t)
& \coloneqq \var\big[f_t(x_t,\pi^*_t(x_t),\xi_{t})+
V_{t+1} (F_t(x_t,\pi^*_t(x_t),\xi_{t}))\big].
\end{aligned}
\end{align}
We have $\sigma_{t,\mathrm{asym}}^2(x_t) = \sigmaprop^2(x_t) + \sigmacurr^2(x_t)$;
cf.\ \eqref{asymvarin}.

\begin{theorem}
\label{thm:clt-lqr}
Let \Cref{ass-1,ass-1'} hold.
For $t=T, \ldots, 1$,  
$
N^{1/2}(\hat{V}_{t, N} - V_{t}) \dst \mathfrak{G}_{t}
$
in $C_w(\mathbb{R}^n)$,
the limit law $\mathfrak{G}_t$ is mean-zero Gaussian, and
$\mathfrak{G}_t(x_t) = b_t^\top x_t + c_t$, where the random vector
$(b_t, c_t)$ satisfies
\begin{align*}
b_{t} = M_{t}^\top\big(2H_{t+1}\vartheta_t + b_{t+1}\big)
\quad 
\text{and}
\quad 
c_t = \zeta_t + c_{t+1}, 
\quad 
\text{with}
\quad 
b_{T+1} = 0, \quad c_{T+1} = 0.
\end{align*}
Defining 
$S_t \coloneqq \var(b_t)$, $v_t \coloneqq \var(c_t)$, 
and $s_t \coloneqq \cov(b_t, c_t)$
with 
$S_{T+1} \coloneqq 0$,
$v_{T+1} \coloneqq 0$, and 
$s_{T+1} \coloneqq 0$,
and $\gamma_t = 2 H_{t+1} \mathbb{E}[(\xi_t^\top H_{t+1} \xi_t) \xi_t]$,
 we have
\[
\sigma_{t, \mathrm{asym}}^2(x_t) = x_t^\top S_t x_t
+ 2 s_t^\top x_t +  v_t,
\]
and
\begin{align*}
\sigmaprop^2(x_t)
& = x_t^\top M_t^\top S_{t+1} M_t x_t 
+ 2x_t^\top M_t^\top s_{t+1} + v_{t+1},
\\
\sigmacurr^2(x_t)
& = 4x_t^\top M_t^\top H_{t+1} \Sigma_tH_{t+1} M_t x_t
+ \var(\xi_t^\top H_{t+1} \xi_t) + 2x_t^\top M_t^\top \gamma_t.
\end{align*}
Moreover,  we have the backward recursions
\begin{align*}
S_t &= M_t^\top (S_{t+1} + 4H_{t+1}\Sigma_tH_{t+1}) M_t, 
\quad 
s_t = M_t^\top(s_{t+1} + \gamma_t),
\quad \text{and} \quad 
v_t = v_{t+1} + \var(\xi_t^\top H_{t+1} \xi_t).
\end{align*}
\end{theorem}

We establish \Cref{thm:clt-lqr} using the following lemma,
which verifies the hypotheses of \Cref{assume:general}. 
For \(t=1,\ldots,T\), we define the dynamic programming operator
\(\mathcal{T}_t \colon \mathcal{D}_{t+1}\to  \mathcal{C}\)
as in \eqref{saaderiv-1}, but with domain $\mathcal{D}_{t+1}$ and
image $\mathcal{C}$.
Similarly, we define
the SAA dynamic programming operator $\hat{ \mathcal{T}}_{t,N} \colon  \mathcal{D}_{t+1}  \to \mathcal{C}$ as in \eqref{saaderiv-2}, but with domain $\mathcal{D}_{t+1}$ and
image $\mathcal{C}$.

\begin{lemma}
\label{lem:lqr-assumptions}
Let \Cref{ass-1,ass-1'} hold.
We have $f_{T+1} \in \mathcal{D}_{T+1}$. Then 
for $t =1, \ldots, T$: 
\begin{enumerate}[nosep,label=\textup{(\roman*)}]
\item The operator $\mathcal{T}_t \colon \mathcal{D}_{t+1} \to \mathcal{C}$ 
is continuous, and Hadamard directionally differentiable
at $V_{t+1}$ tangentially to $\mathcal{D}_{t+1}$
with 
$[\mathcal{T}_t'(V_{t+1}; D)](x_t) = b^\top M_t x_t + c$,
where
$D(x) \coloneqq b^\top x + c$,
and $\mathcal{T}_t \mathcal{D}_{t+1} \subset \mathcal{D}_t$.
\item For each $N \in \mathbb{N} $, the operators $\hat{\mathcal{T}}_{t,N} \colon \mathcal{D}_{t+1} \to \mathcal{C}$ are continuous, and independent of $\hat{V}_{t+1, N}$, 
and $\hat{\mathcal{T}}_{t,N} \mathcal{D}_{t+1} \subset \mathcal{D}_t$  
with probability one.
\item For  the random element $\mathfrak{H}_t$  with values in $\mathcal{C}$
given by $\mathfrak{H}_t(x_t) \coloneqq 2 \vartheta_t^\top H_{t+1} M_t x_t
+
\zeta_t$, 
\begin{align}
\label{eq:lqr-intermediate-limit}
N^{1/2}
\big[
\hat{\mathcal{T}}_{t,N}- \mathcal{T}_t
\big]
(V_{t+1})
\dst \mathfrak{H}_t,
\end{align}
where $\vartheta_t$ and $\zeta_t$ are given in \Cref{lem:appendix-2}. 
\item If 
$N^{1/2}(\hat V_{t+1,N}-V_{t+1}) \dst \mathfrak G_{t+1}$, 
then the stochastic equicontinuity-type condition \eqref{eq:general-continuity} is satisfied.
\end{enumerate}

\end{lemma}

We present the proofs of \Cref{thm:clt-lqr,lem:lqr-assumptions} in 
\Cref{sec:lqr-proofs}.

\section{Numerical illustrations}
\label{sec:lqr-numerical}

This section uses the classical LQR to provide empirical illustrations of the SAA method's asymptotic properties. 
In particular, we examine the asymptotic
distribution of $ N^{1/2}(\hat{V}_{1,N}(x_1) - V_1(x_1))$
and investigate
the variance structure of
$N^{1/2}(\hat{V}_{t,N}- V_t) $,
proceeding backward in time
from $ t = T $ to $ t = 1$. Moreover, we illustrate the dependence of
the variance of $ N^{1/2}(\hat{V}_{t,N}(x_t) - V_t(x_t))$ as
a function of the time period $t$.
All computer code and simulation results are available in the repository \cite{Milz2025}.

We consider a specific instance of the LQR problem  (see \Cref{sec:LQR}) given by
\begin{align*}
f_t(x_t,u_t, \xi_t) \coloneqq x_t^2 + u_t^2,
\quad
 F_t(x_t, u_t, \xi_t) \coloneqq x_t + u_t +\xi_t,
\quad
\xi_t \sim  \mathrm{Uniform}(-\sqrt{3},\sqrt{3}),
\quad \text{and} \quad
T = 20,
\end{align*}
as well as $f_{T+1}(x_{T+1}) \coloneqq x^2_{T+1}$.

From \Cref{sec:LQR}, specifically
\eqref{eq:lqr:limit-variance},  we recall the asymptotic variance
$
\sigma_{t, \mathrm{asym}}^2(x_t) =
\var(\mathfrak{G}_t(x_t))
$,
where $\mathfrak{G}_t$ is given in \Cref{thm:clt-lqr}. 
For our LQR instance,
\Cref{thm:clt-lqr} yields
\begin{align}
\label{eq:lqr:variance}
\sigma_{t, \mathrm{asym}}^2(x_t)  = x_t^\top S_t x_t +   v_t,
\end{align}
where
$S_t$, and $v_t$ are defined in \Cref{thm:clt-lqr}.
Moreover, from \eqref{eq:lqr:prop-curr-variance}, we recall the
\emph{propagated future variance} $\sigmaprop^2(x_t)$ 
and the \emph{current stage variance} $\sigmacurr^2(x_t)$.

\Cref{fig:lqr:hist} depicts histograms of the random variable
  $N^{1/2}(\hat{V}_{t,N}(x_t) - V_{t}(x_t))$. The distributions are
  shown for a fixed state using $N=1000$ samples and are
  generated from $10,000$ independent trials. The subfigures
  present the histograms at three distinct time steps.
The variance grows for earlier
time steps (from $t=20$ down to $t=1$), reflecting the accumulation of
stochastic error from future stages.

\begin{figure}[t]
  \centering
  \begin{subfigure}[b]{0.32\linewidth}
    \includegraphics[width=0.95\linewidth]{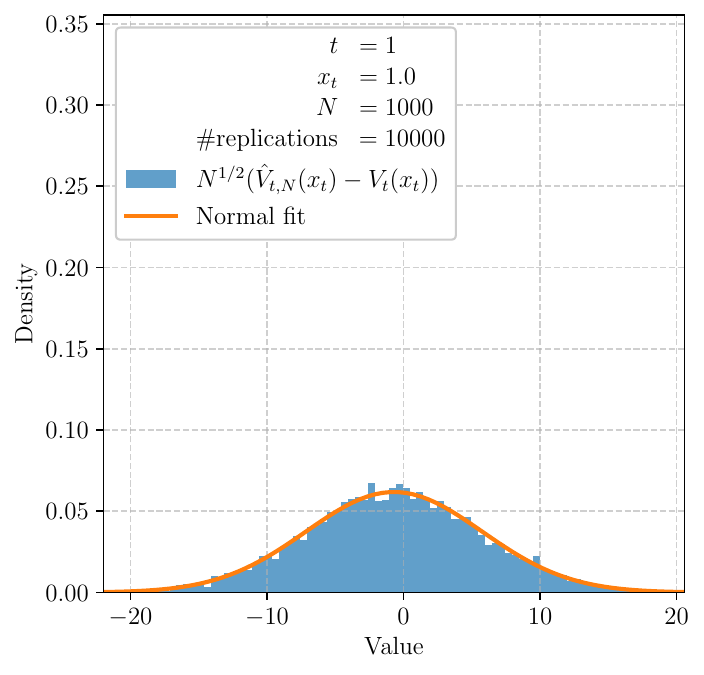}
    \caption{$t=1$.}
  \end{subfigure}
  \hfill
  \begin{subfigure}[b]{0.32\linewidth}
    \includegraphics[width=0.95\linewidth]{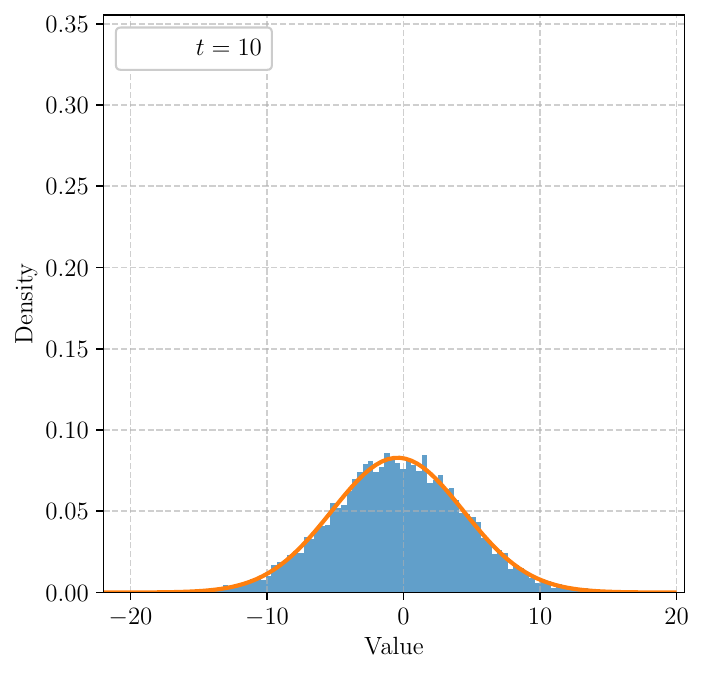}
    \caption{$t=10$.}
  \end{subfigure}
  \hfill
  \begin{subfigure}[b]{0.32\linewidth}
    \includegraphics[width=0.95\linewidth]{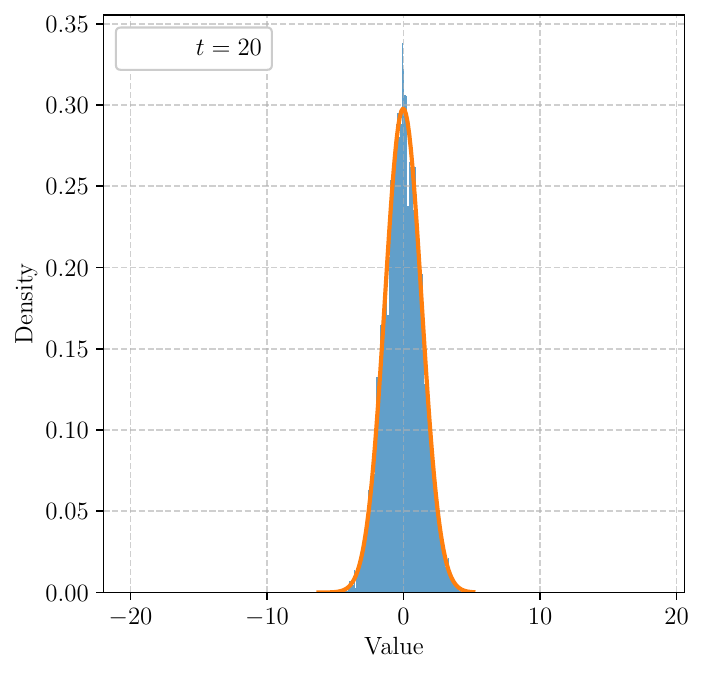}
    \caption{$t=20$.}
  \end{subfigure}
  \caption{%
Empirical distributions of the normalized value function estimation
error at different time stages
for the linear quadratic control problem.
The histograms visualize the empirical
distribution
of the scaled error, $N^{1/2}(\hat{V}_{t,N}(x_t) - V_{t}(x_t))$.
These results are based on $10,000$
independent replications, each using a sample size of $N=1000$ for the
state $x_t=1$. Each subfigure includes a curve representing a fitted normal distribution with the mean and standard deviation estimated from the simulation data.
}
\label{fig:lqr:hist}
\end{figure}

\Cref{fig:lqr:probplot} provides normal probability plots of the
normalized error,   $N^{1/2}(\hat{V}_{t,N}(x_t) - V_{t}(x_t))$.

\begin{figure}[t]
  \centering
\begin{subfigure}[b]{0.32\linewidth}
  \includegraphics[width=0.95\linewidth]{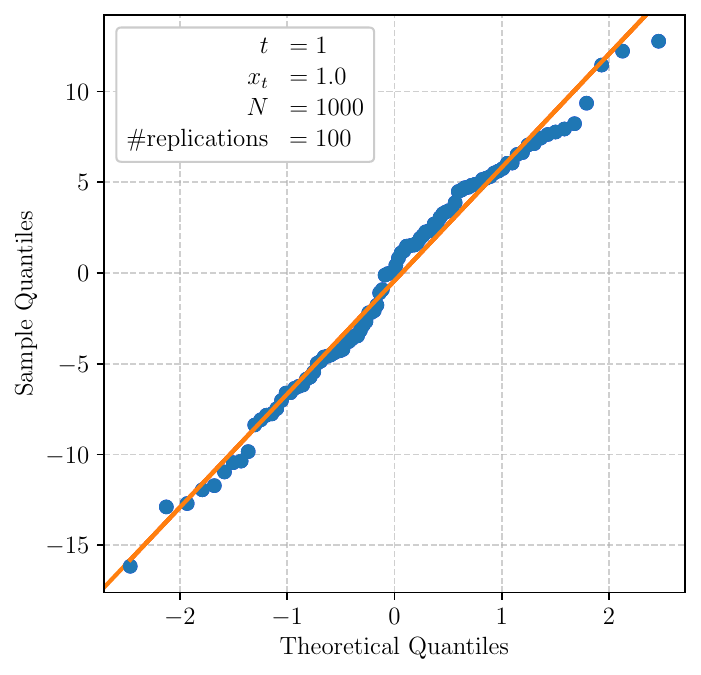}
  \caption{Time period $t=1$.}
\end{subfigure}
\hfill
\begin{subfigure}[b]{0.32\linewidth}
  \includegraphics[width=0.95\linewidth]{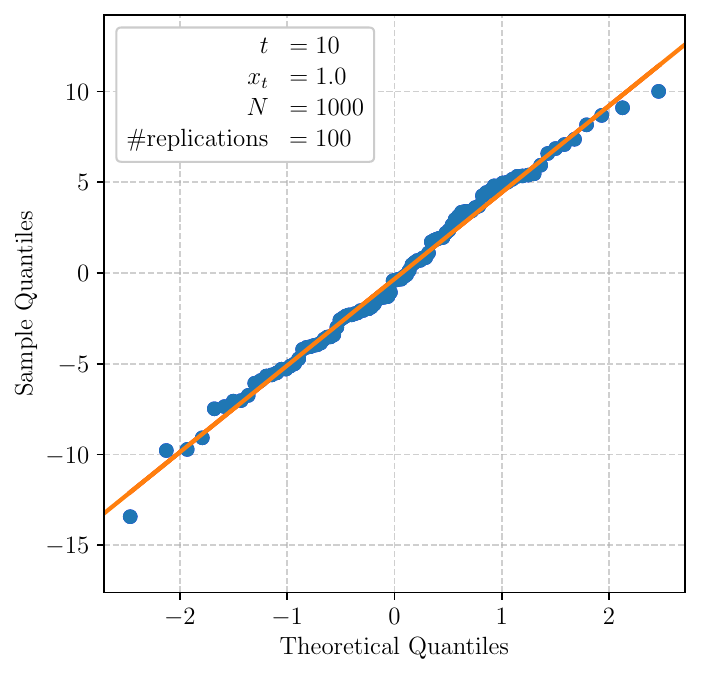}
  \caption{Time period $t=10$.}
\end{subfigure}
\hfill
\begin{subfigure}[b]{0.32\linewidth}
  \includegraphics[width=0.95\linewidth]{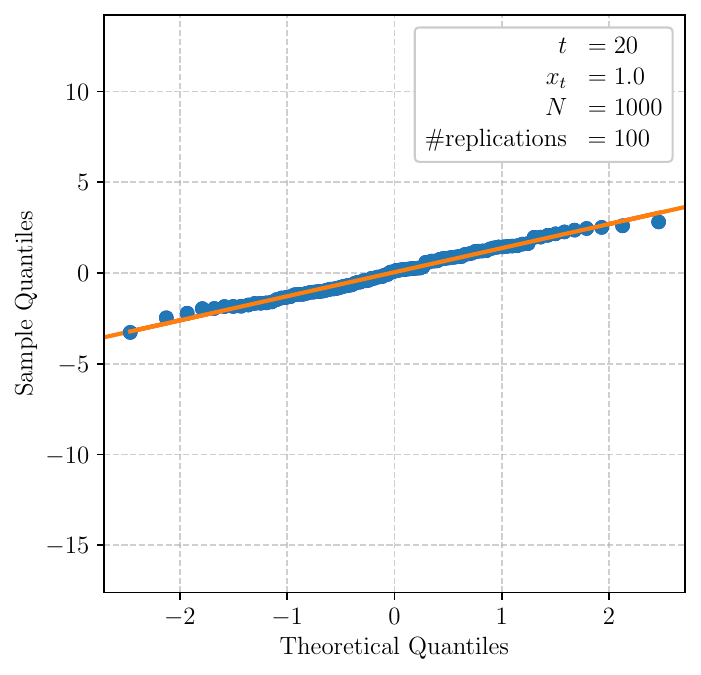}
  \caption{Time period $t=20$.}
\end{subfigure}

  \caption{%
Normal probability plots of the normalized value function estimation error,
$N^{1/2}(\hat{V}_{t,N}(x_t) - V_t(x_t))$,
for  time stages $t \in \{1, 10, 20\}$.
Each plot illustrates the empirical distribution of the scaled estimation error for a fixed state $x_t = 10$, sample size $N = 1000$,
and number of  replications equal to $100$.
}
\label{fig:lqr:probplot}
\end{figure}

\Cref{fig:lqr:variances-1} illustrates the quadratic variance
  function, $\sigma_{t, \mathrm{asym}}^2(x_t)$, over time.
The left panel shows the evolution of the parameters $S_t$
    (quadratic term) and $v_t$ (constant term) across the time horizon.
The right panel displays snapshots of the complete variance
    function $\sigma_{t, \mathrm{asym}}^2$ as a function of the state $x_t$ at
    different time points.
The variance is largest at early time steps and decreases as
time $t$
approaches the terminal time, showing how uncertainty compounds backward
from the future.

\begin{figure}[t]
  \centering
  \begin{subfigure}[b]{0.40\linewidth}
    \includegraphics[width=0.95\linewidth]{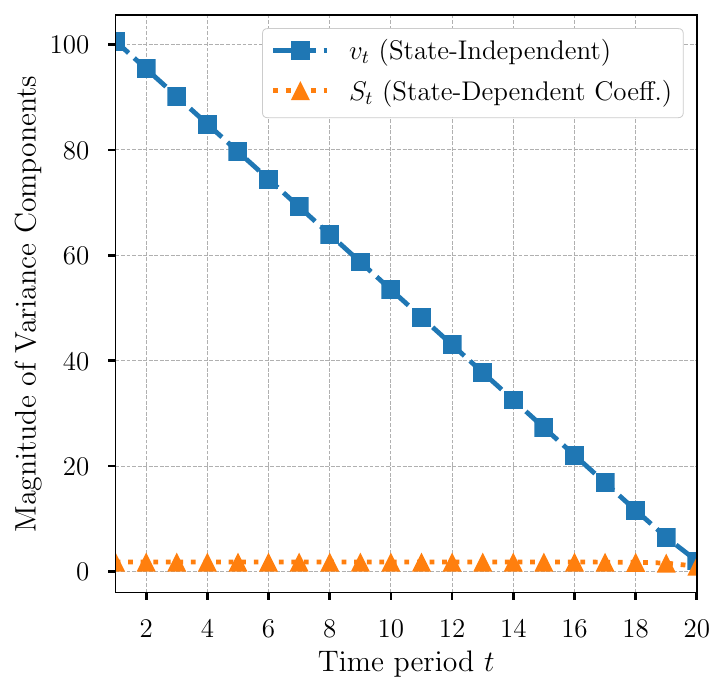}
    \caption{Quadratic coefficient
$S_t$ and the state-independent offset $v_t$ over the time horizon.}
  \end{subfigure}
  \hfill
  \begin{subfigure}[b]{0.40\linewidth}
    \includegraphics[width=0.95\linewidth]{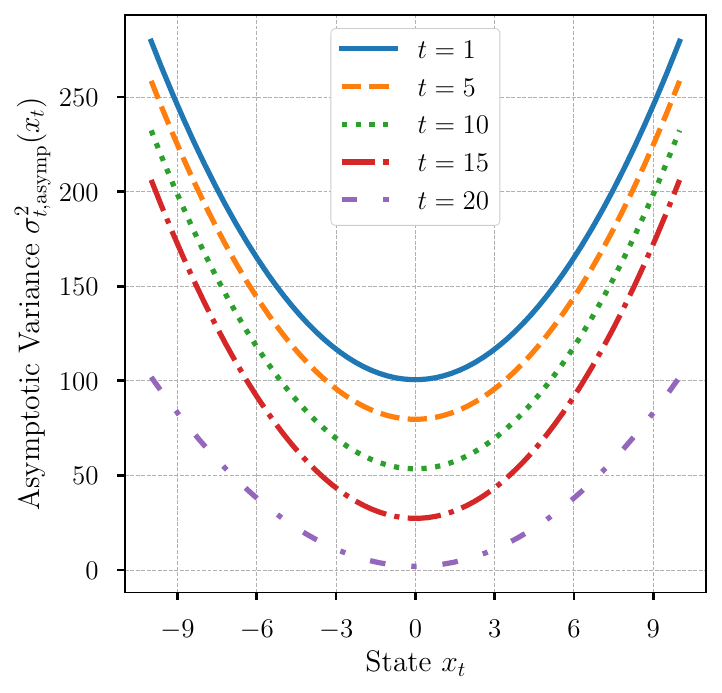}
    \caption{%
Snapshots of the variance function $\sigma_{t, \mathrm{asym}}^2(\cdot)$
at different time points.}
  \end{subfigure}
  \caption{%
Visualization of the quadratic asymptotic variance function.
The asymptotic variance is given by $\sigma_{t, \mathrm{asym}}^2(x_t) = x_t^\top S_t x_t + v_t$
per \eqref{eq:lqr:variance}.
}
\label{fig:lqr:variances-1}
\end{figure}

\Cref{fig:lqr:variances-2} shows the decomposition of the total
  variance into its two constituent parts: the propagated future
  variance, $\sigmaprop^2(x_t)$ and the current stage variance,
  $\sigmacurr^2(x_t)$, as provided in \eqref{eq:lqr:prop-curr-variance}.
The plots show these two components and their sum
  (the total variance $\sigma_{t, \mathrm{asym}}^2$) over the time horizon, evaluated
  at two distinct states: $x_t = 1/2$ (left panel) and
  $x_t = 3/2$ (right panel).
Here, propagated future
variance, $\sigmaprop^2$, seems to be the dominant contributor to the total variance. The stage variances shown in \Cref{fig:lqr:variances-1,fig:lqr:variances-2} remain constant over most time periods, which we attribute to the simple structure of the LQR problem.

\begin{figure}[t]
  \centering
  \begin{subfigure}[b]{0.40\linewidth}
    \includegraphics[width=0.95\linewidth]{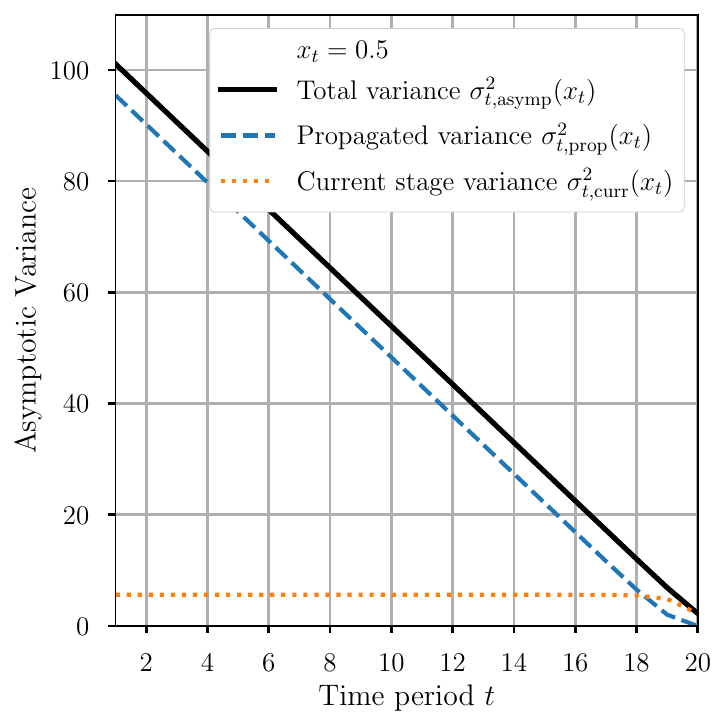}
    \caption{Variances  evaluated at $x_t = 1/2$.}
  \end{subfigure}
  \hfill
  \begin{subfigure}[b]{0.40\linewidth}
    \includegraphics[width=0.95\linewidth]{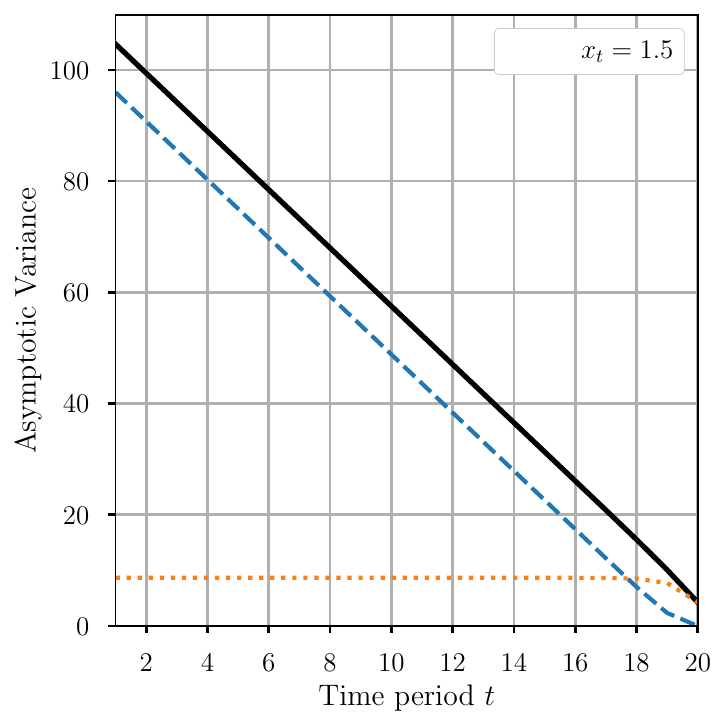}
    \caption{Variances evaluated at $x_t = 3/2$.}
  \end{subfigure}
  \caption{%
Decomposition of the total asymptotic variance $\sigma_{t, \mathrm{asym}}^2(x_t)$ into its two
sources: the \textit{propagated future variance}, $\sigmaprop^2(x_t)$
(see \eqref{eq:lqr:prop-curr-variance}), which is inherited from future
stages, and the \textit{current stage variance}, $\sigmacurr^2(x_t)$
(see \eqref{eq:lqr:prop-curr-variance}), which is generated locally. The
panels plot these two components and their sum
over the time horizon for two distinct states.
}
\label{fig:lqr:variances-2}
\end{figure}

\section{Summary and conclusions}
\label{sec-summary}

As  mentioned in the introduction section, the asymptotics of the SAA method are well understood for static (one-stage) stochastic programs. On the other hand, there are virtually no results for CLT-type asymptotics of  the SAA method applied to SOC or multistage stochastic programming. This paper takes the first  step toward addressing this gap.
Our main result, the CLT in \Cref{thm:clt}, relies on the stochastic
equicontinuity of the SAA dynamic programming operators. We use
random empirical process theory to provide sufficient conditions for this
key requirement. Because these operators are functions of value functions,
we introduce \Cref{ass-4}, which ensures that the classes of SAA optimal
value functions have sufficiently small covering numbers.

This work opens several avenues for future research. Open questions
include developing limit theorems for non-iid, state- and
control-dependent probability distributions and identifying broader problem
classes that satisfy the stochastic equicontinuity-type condition.
In particular, SOC with Markovian noise could be considered; see, e.g.,
\cite{Song1993,Milz2026}.
As illustrated in \Cref{sec:lqr-numerical} for the LQR example, the numerical
results suggest that, for fixed sample size $N$, the asymptotic
variance of the scaled SAA value
$N^{1/2}\big(\hat V_{t,N}(x_t)-V_t(x_t)\big)$
increases approximately linearly as one moves backward in time.
In the same example, the true value also grows linearly with the horizon.
This suggests that, in such settings, the relative statistical error of
the SAA approach, measured by the ratio of the standard deviation to the
mean, may behave like $O(N^{-1/2}T^{-1/2})$. Identifying conditions
under which such behavior holds more generally remains an interesting
question for future research.

\paragraph{Acknowledgments}
We thank Xin Chen for many fruitful discussions on this topic, for
suggesting that we consider convex value functions, and for pointing us
to relevant references. We are also grateful to Ashwin Pananjady for
discussions and references related to functional limit theorems, and to
Olena Melnikov for helpful comments on an earlier draft.

Some derivations in \Cref{sec:lqr-proofs} were initially drafted with assistance
from Google's Gemini 3.1 Pro, 
and ChatGPT 5.4 Pro, based on
theorems and equations provided by the authors. The model assisted in expanding
recursions, computing intermediate steps, and generating \LaTeX\ code. All
results were subsequently revised and verified by the authors.
In addition, Gemini 3.1 Pro and ChatGPT 5.4 Pro were used to produce initial drafts of selected paragraphs, based on bullet points provided by the authors.

\paragraph{Reproducibility of computational results}

Computer code that allows the reader to reproduce the computational results in this manuscript is available at
\url{https://doi.org/10.5281/zenodo.16733442}.

\appendix 

\section{Proofs of \Cref{lem:lqr-assumptions,thm:clt-lqr}}
\label{sec:lqr-proofs}

\begin{proof}[Proof of \Cref{lem:lqr-assumptions}]
We verify the hypotheses of \Cref{thm:general-clt}.
As discussed, $\mathcal{C}$ is a separable Banach space, $\mathcal{D}_t$ is closed, and $f_{T+1} \in \mathcal{D}_{T+1}$. The set $\mathcal{D}_t$ is affine-isomorphic to $\mathbb{R}^n \times \mathbb{R}$,
which we use repeatedly throughout the proof.
We prepare the verification of each part of  \Cref{lem:lqr-assumptions}.
For
$
W(x)=x^\top H_{t+1}x+b^\top x+c \in \mathcal D_{t+1},
$
direct computations give
(cf.\  \cite[Section 4.1]{Bertsekas2005})
\begin{align}
\label{eq:TW-lqr}
[\mathcal T_t W](x_t)
=
x_t^\top H_t x_t
+
b^\top M_t x_t
+
c
+
\mathbb E[\xi_t^\top H_{t+1}\xi_t]
-
\tfrac14\,b^\top B_t K_t^{-1} B_t^\top b.
\end{align}
With 
$
\bar\xi_{t,N} \coloneqq (1/N) \sum_{i=1}^N \xi_{ti}
$,
we also obtain
\begin{align}
\label{eq:hatTW-lqr}
\begin{aligned}
[\hat{\mathcal T}_{t,N} W](x_t)
&=
x_t^\top H_t x_t
+
b^\top M_t x_t
+
c
+
\tfrac1N \tsum_{i=1}^N \xi_{ti}^\top H_{t+1}\xi_{ti}
\\
&\quad
+
2\bar\xi_{t,N}^\top H_{t+1} M_t x_t
+
b^\top
\bigl(I-B_tK_t^{-1}B_t^\top H_{t+1}\bigr)\bar\xi_{t,N}
\\
&\quad
-
\bar\xi_{t,N}^\top
H_{t+1}B_tK_t^{-1}B_t^\top H_{t+1}\bar\xi_{t,N}
-
\tfrac14\,b^\top B_t K_t^{-1} B_t^\top b.
\end{aligned}
\end{align}

(i) We show that $\mathcal{T}_t$ is Hadamard directionally differentiable at $V_{t+1}$ tangentially to $\mathcal{D}_{t+1}$.
Let $\tau_N \downarrow 0$ and $W_N \in \mathcal{C}$ such that $V_{t+1} + \tau_N W_N \in \mathcal{D}_{t+1}$.
Since 
$V_{t+1} \in \mathcal{D}_{t+1}$,
$\nabla^2 W_N(0) = 0$, giving $W_N(x_t) = b_N^\top x_t + c_N$ with $W_N \to W$, $b_N \to b$, and $c_N \to c$.
Using \eqref{eq:TW-lqr}, we have
\begin{align*}
[\mathcal{T}_t(V_{t+1} + \tau_N W_N)](x_t) 
&= x_t^\top H_t x_t + q_t \\
&\quad + \tau_N (b_N^\top M_t x_t + c_N) \\
&\quad - \tfrac{\tau_N^2}{4} b_N^\top B_t (R_t + B_t^\top H_{t+1} B_t)^{-1} B_t^\top b_N.
\end{align*}
Since $\mathcal{T}_t V_{t+1}(x_t) = x_t^\top H_t x_t + q_t$, we obtain $[\mathcal{T}_t' (V_{t+1}; W)](x_t) = b^\top M_t x_t + c$.
The inclusion $\mathcal{T}_t \mathcal{D}_{t+1} \subset \mathcal{D}_t$ follows from \eqref{eq:TW-lqr}.

(ii) The operators $\hat{\mathcal{T}}_{t,N}$ are continuous by definition and independent by sample independence. The inclusion $\hat{\mathcal{T}}_{t,N} \mathcal{D}_{t+1} \subset \mathcal{D}_t$ follows from \eqref{eq:hatTW-lqr}.

(iii) Using \eqref{eq:TW-lqr} and \eqref{eq:hatTW-lqr}, we have
\begin{align*}
\big[ \hat{\mathcal{T}}_{t,N}(V_{t+1}) - \mathcal{T}_t(V_{t+1}) \big](x_t) 
&= \Big( \tfrac{1}{N} \tsum_{i=1}^N \xi_{ti}^\top H_{t+1} \xi_{ti} - \mathbb{E}[\xi_t^\top H_{t+1} \xi_t] \Big) \\
&\quad + 2 \bar{\xi}_{t,N}^\top H_{t+1} M_t x_t \\
&\quad - \bar{\xi}_{t,N}^\top H_{t+1} B_t (R_t + B_t^\top H_{t+1} B_t)^{-1} B_t^\top H_{t+1} \bar{\xi}_{t,N}.
\end{align*}
The mapping $\iota_t \colon \mathbb{R}^n \times \mathbb{R} \to C_w(\mathbb{R}^n)$ defined by $[\iota_t(b, v)](x_t) \coloneqq b^\top M_t x_t + v$ is continuous. Applying the multivariate CLT,  and using the continuous mapping theorem, and Slutsky's theorem,
\begin{align*}
N^{1/2} \big[ \hat{\mathcal{T}}_{t,N}(V_{t+1}) - \mathcal{T}_t(V_{t+1}) \big]
\dst \mathfrak{H}_t(\cdot) \coloneqq 2 \vartheta_t^\top H_{t+1} M_t \cdot + \zeta_t
\quad \text{in} \quad C_w(\mathbb{R}^n).
\end{align*}

(iv) We verify the stochastic equicontinuity-type condition. 
Since $\hat V_{t+1,N},V_{t+1}\in \mathcal D_{t+1}$, there exist
random vectors $\hat b_N\in \mathbb R^n$ and random scalars
$\hat c_N\in \mathbb R$ such that
\[
\hat V_{t+1,N}(x)=V_{t+1}(x)+\hat b_N^\top x+\hat c_N.
\]
Because
$
N^{1/2}(\hat V_{t+1,N}-V_{t+1}) \dst \mathfrak G_{t+1}
$
in $C_w(\mathbb R^n)$
and $\mathcal D_{t+1}$ is affine-isomorphic to
$\mathbb R^n\times \mathbb R$, we have
\[
\hat b_N=O_p(N^{-1/2}),
\quad 
\text{and}
\quad 
\hat c_N=O_p(N^{-1/2}).
\]

Now, let
$
W(x)=x^\top H_{t+1}x+b^\top x+c \in \mathcal D_{t+1}
$.
Subtracting \eqref{eq:TW-lqr} from \eqref{eq:hatTW-lqr} yields
\begin{align*}
[\hat{\mathcal T}_{t,N}W-\mathcal T_tW](x_t)
&=
\Big(
\tfrac1N\tsum_{i=1}^N \xi_{ti}^\top H_{t+1}\xi_{ti}
-
\mathbb E[\xi_t^\top H_{t+1}\xi_t]
\Big)
\\
&\quad
+
2\bar\xi_{t,N}^\top H_{t+1}M_t x_t
+
b^\top
\bigl(I-B_tK_t^{-1}B_t^\top H_{t+1}\bigr)\bar\xi_{t,N}
\\
&\quad
-
\bar\xi_{t,N}^\top
H_{t+1}B_tK_t^{-1}B_t^\top H_{t+1}\bar\xi_{t,N}.
\end{align*}
Evaluating this identity at
$\hat V_{t+1,N}=V_{t+1}+\hat b_N^\top x+\hat c_N$
and at $V_{t+1}$, and subtracting, yields
\begin{align*}
\big(
[\hat{\mathcal T}_{t,N}-\mathcal T_t](\hat V_{t+1,N})
-
[\hat{\mathcal T}_{t,N}-\mathcal T_t](V_{t+1})
\big)(x_t)
&=
\hat b_N^\top
\bigl(I-B_tK_t^{-1}B_t^\top H_{t+1}\bigr)\bar\xi_{t,N},
\end{align*}
which is state-independent. Since
$\bar\xi_{t,N}=O_p(N^{-1/2})$, it follows that
\[
\|
[\hat{\mathcal T}_{t,N}-\mathcal T_t](\hat V_{t+1,N})
-
[\hat{\mathcal T}_{t,N}-\mathcal T_t](V_{t+1})
\|_{C_w(\mathbb R^n)}
=
O_p(N^{-1})
=
o_p(N^{-1/2}),
\]
which proves the stochastic equicontinuity-type condition.
\end{proof}

\begin{proof}[Proof of \Cref{thm:clt-lqr}]
By \Cref{lem:lqr-assumptions}, the hypotheses of \Cref{thm:general-clt} hold. 
We proceed by backward induction.
If $N^{1/2}(\hat{V}_{t+1,N} - V_{t+1}) \dst \mathfrak{G}_{t+1}$,
then
$N^{1/2}(\hat{V}_{t,N} - V_t) \dst \mathfrak{G}_t$ with
\begin{align*}
\mathfrak{G}_{t}(x_t) &= \mathbb{E}_{\xi_t \sim P_t}[\mathfrak{G}_{t+1}(M_t x_t + \xi_t)] + \mathfrak{H}_t(x_t).
\end{align*}

For $t=T$, $\mathfrak{G}_{T+1} = 0$, giving $\mathfrak{G}_T(x_T) = \mathfrak{H}_T(x_T) = b_T^\top x_T + c_T$ with $b_T = 2 M_T^\top H_{T+1} \vartheta_T$ and $c_T = \zeta_T$.
Assume $\mathfrak{G}_{t+1}(x_{t+1}) = b_{t+1}^\top x_{t+1} + c_{t+1}$. Since $\mathbb{E}[\xi_t] = 0$,
\begin{align}
\label{eq:Ap11524}
\mathbb{E}_{\xi_t \sim P_t}[\mathfrak{G}_{t+1}(M_t x_t + \xi_t)] 
&= b_{t+1}^\top M_t x_t + c_{t+1}.
\end{align}
Substituting this and $\mathfrak{H}_t(x_t) = 2 \vartheta_t^\top H_{t+1} M_t x_t + \zeta_t$ into the recursion,
\begin{align*}
\mathfrak{G}_{t}(x_t)
&= \big(2 \vartheta_t^\top H_{t+1} M_t x_t + \zeta_t\big) + \big(b_{t+1}^\top M_t x_t + c_{t+1}\big) \\
&= \big(M_t^\top(2 H_{t+1} \vartheta_t + b_{t+1})\big)^\top x_t + \big(\zeta_t + c_{t+1}\big).
\end{align*}
Thus, $\mathfrak{G}_t(x_t) = b_t^\top x_t + c_t$ with $b_{t} \coloneqq M_{t}^\top(2H_{t+1}\vartheta_t + b_{t+1})$ and $c_t \coloneqq \zeta_t + c_{t+1}$.

We establish the moment recursions. 
We recall $S_t =\var(b_t)$, $v_t = \var(c_t)$, 
and $s_t=\cov(b_t, c_t)$.
The current stage noises $\vartheta_t, \zeta_t$ are independent of $b_{t+1}, c_{t+1}$.
Hence
\begin{align*}
S_t &= \var\big(M_t^\top(2 H_{t+1} \vartheta_t + b_{t+1})\big) 
    = M_t^\top \big(\var(b_{t+1}) + 4 H_{t+1} \var(\vartheta_t) H_{t+1}\big) M_t \\
    &= M_t^\top(S_{t+1} + 4 H_{t+1} \Sigma_t H_{t+1}) M_t.
\end{align*}
For the covariance vector $s_t$,
\begin{align*}
s_t &= \cov\big(M_t^\top(2 H_{t+1} \vartheta_t + b_{t+1}), \zeta_t + c_{t+1}\big) 
    = M_t^\top \big(\cov(b_{t+1}, c_{t+1}) + 2 H_{t+1} \cov(\vartheta_t, \zeta_t)\big) \\
    &= M_t^\top (s_{t+1} + \gamma_t),
\end{align*}
where $\gamma_t = 2 H_{t+1} \mathbb{E}[(\xi_t^\top H_{t+1} \xi_t) \xi_t]$. 
For the scalar variance $v_t$,
\begin{align*}
v_t &= \var(\zeta_t + c_{t+1}) = v_{t+1} + \var(\xi_t^\top H_{t+1} \xi_t).
\end{align*}

The total asymptotic variance decomposes as
\begin{align*}
\sigma_{t, \mathrm{asym}}^2(x_t) = \var(b_t^\top x_t + c_t) = x_t^\top S_t x_t + 2 x_t^\top s_t + v_t.
\end{align*}
For the propagated variance $\sigmaprop^2(x_t)$,
we use \eqref{eq:Ap11524}  to obtain 
\begin{align*}
\sigmaprop^2(x_t) & = 
\var\big(\mathbb{E}_{\xi_t \sim P_t}[\mathfrak{G}_{t+1}(M_t x_t + \xi_t)]\big) 
= \var(b_{t+1}^\top M_t x_t + c_{t+1}) 
\\
& = x_t^\top M_t^\top S_{t+1} M_t x_t + 2 x_t^\top M_t^\top s_{t+1} + v_{t+1}.
\end{align*}
For the current stage variance $\sigmacurr^2(x_t)$, the optimal action $\pi_t^*$ fixes deterministic components of $f_t$, yielding
\begin{align*}
\sigmacurr^2(x_t) &= \var_{\xi_t \sim P_t}\big[V_{t+1}(M_t x_t + \xi_t)\big].
\end{align*}
Expanding $V_{t+1}(M_t x_t + \xi_t) = (M_t x_t + \xi_t)^\top H_{t+1} (M_t x_t + \xi_t) + q_{t+1}$ and isolating stochastic terms,
\begin{align*}
\sigmacurr^2(x_t) &= \var_{\xi_t}\big[2x_t^\top M_t^\top H_{t+1} \xi_t + \xi_t^\top H_{t+1} \xi_t\big] \\
&= \var(2x_t^\top M_t^\top H_{t+1} \xi_t) + \var(\xi_t^\top H_{t+1} \xi_t) + 2 \cov(2x_t^\top M_t^\top H_{t+1} \xi_t, \xi_t^\top H_{t+1} \xi_t) \\
&= 4 x_t^\top M_t^\top H_{t+1} \Sigma_t H_{t+1} M_t x_t + \var(\xi_t^\top H_{t+1} \xi_t) + 2 x_t^\top M_t^\top \gamma_t.
\end{align*}
\end{proof}

\section{Statistical limit theorems for SOC
under nonunique optimal policies}
\label{app-B}

We now extend the analysis from \Cref{sec-cltsaa} to the case of nonunique optimal policies.
In this setting, the derivative of the Bellman minimization operator is
generally nonlinear, so the limiting law of the SAA value functions need
not be Gaussian;  for the terminal decision stage $t=T$ this was already pointed out in Remark \ref{rem-nonunique}.  Nevertheless, by working with the action-value
operators before minimization, we obtain a backward recursion that
characterizes the weak limits stage by stage.
Our main result of this section, Theorem \ref{thm:nonunique-limits}, can be viewed as a corollary of \Cref{thm:general-clt}.
We recall the definition of the action-value operators
$\mathcal{Q}_t$, $\hat{\mathcal Q}_{t,N} \colon C(\mathcal{X}_{t+1}) \to C(\mathcal{X}_t  \times \mathcal{U}_t)$ from \eqref{eq:q-function}
and \eqref{eq:hatq-function}, respectively,
and  the definition of $\Phi_t$ from \eqref{Phi-t}.
For $\phi \in C(\mathcal{X}_t \times \mathcal{U}_t)$,    we recall 
from \Cref{lem:Gt-Hadamard}, the optimal solution set
\[\sU^*_t(\phi; x_t) = \argmin_{u_t \in \mathcal{U}_t} \phi(x_t, u_t).\]

If \Cref{ass-1,ass-1',ass-2}(i)--(iii) hold, then 
adapting the proof of \Cref{pr-1} yields
\[N^{1/2}(\hat{V}_{T,N} - V_{T}) \dst \mathfrak{G}_{T},\]
where $\mathfrak{G}_{T}$ is a potentially non-Gaussian random element with values in $C(\mathcal{X}_{T})$ given by
\begin{equation}
\label{gausselement}
\mathfrak{G}_T(x_T) = \inf_{u_T \in \sU^*_T\big (\mathcal{Q}_T(f_{T+1}); x_T\big)} \mathfrak{Y}_T(x_T, u_T),
\end{equation}
and $\mathfrak{Y}_T$ 
is a mean-zero Gaussian process in $C(\mathcal{X}_T \times \mathcal{U}_T)$ given by
\begin{align*}
N^{1/2} \big[ \hat{\mathcal{Q}}_{T,N}(f_{T+1}) - \mathcal{Q}_T(f_{T+1}) \big] \dst \mathfrak{Y}_T.
\end{align*}
 This is equivalent to the formulation \eqref{asymvalue} of the respective asymptotics, with $\sU^*_T\big (\mathcal{Q}_T(f_{T+1}); x_T\big)$ coinciding with 
 $\U_T^*(x_T)$ defined  in \eqref{asymval-2}.

\begin{theorem}
\label{thm:nonunique-limits}
Let \Cref{ass-1,ass-1',ass-2}\textup{(i)--(iii)} hold.
Let  $t \in \{1, \ldots, T\}$.
Suppose that $N^{1/2}(\hat{V}_{t+1,N} - V_{t+1}) \dst \mathfrak{G}_{t+1}$,
where $\mathfrak{G}_{t+1}$ is a random element in $C(\mathcal{X}_{t+1})$ 
\emph{(induction hypothesis)}.
Then 
\begin{align}
\label{eq:Apr12026}
N^{1/2} \big[ \hat{\mathcal{Q}}_{t,N}(V_{t+1}) - \mathcal{Q}_t(V_{t+1}) \big] \dst \mathfrak{Y}_t,
\end{align}
    where $\mathfrak{Y}_t$ is a mean-zero Gaussian process in $C(\mathcal{X}_t \times \mathcal{U}_t)$, 
with covariance function
\begin{align*}
\cov(\mathfrak{Y}_t(x_t, u_t), \mathfrak{Y}_t(x_t^\prime, u_t^\prime)) = \cov\big(\Phi_t(x_t, u_t, \xi_t), \Phi_t(x_t^\prime, u_t^\prime, \xi_t)\big).
\end{align*}
Suppose further that  the stochastic equicontinuity-type condition \eqref{eq:q-equicontinuity}
holds.
Then 
\begin{align}
\label{eq:joint-q-limit}
N^{1/2} \big[ \hat{\mathcal{Q}}_{t,N}(\hat{V}_{t+1,N}) - \mathcal{Q}_t(V_{t+1}) \big] \dst \mathfrak{Z}_t,
\end{align}
where $\mathfrak{Z}_t$ is a random element in $C(\mathcal{X}_t \times \mathcal{U}_t)$ given 
by
\[\mathfrak{Z}_t(x_t, u_t) \coloneqq \mathbb{E}_{\xi_t \sim P_t} [ \mathfrak{G}_{t+1}(F_t(x_t, u_t, \xi_t)) ] + \mathfrak{Y}_t(x_t, u_t).\]
Moreover, 
$N^{1/2}(\hat{V}_{t,N} - V_{t})$ converges in distribution to a random element $\mathfrak{G}_{t}$ in $C(\mathcal{X}_{t})$ given by
\begin{align}
\label{eq:nonunique-limits}
\mathfrak{G}_{t}(x_t) = \inf_{u_t \in \sU^*_t\big (\mathcal{Q}_{t}(V_{t+1}); x_t\big )} \big\{\,  \mathbb{E}_{\xi_t \sim P_t} [ \mathfrak{G}_{t+1}(F_t(x_t, u_t, \xi_t)) ] + \mathfrak{Y}_t(x_t, u_t) 
\, 
\big\}.
\end{align}
\end{theorem}

Before establishing \Cref{thm:nonunique-limits}, let us comment on the limit law recursion in 
\eqref{eq:nonunique-limits}.

\begin{remark}
\normalfont 
The limit processes $\mathfrak{G}_t$ in \eqref{eq:nonunique-limits} satisfy a dynamic programming principle.
Indeed, let $\mathfrak{T}_t \colon C(\mathcal{X}_{t+1}) \to C(\mathcal{X}_t)$ be defined by
\begin{equation}
\label{eq:nonunique-limit-operator}
[\mathfrak{T}_t(W)](x_t) \coloneqq  \inf_{u_t \in \sU^*_t\big(\mathcal{Q}_{t}(V_{t+1}); x_t\big)} \big\{ \mathfrak{Y}_t(x_t, u_t) + \mathbb{E}_{\xi_t \sim P_t}[W(F_t(x_t, u_t, \xi_t))] \big\},
\end{equation}
where $\mathfrak{Y}_t$ is as in \eqref{eq:Apr12026}. Then, the limit processes satisfy the backward recursion
\begin{align*}
\mathfrak{G}_t  = \mathfrak{T}_t \mathfrak{G}_{t+1}, \quad t = 1, \ldots, T-1,
\end{align*}
with the terminal condition at stage $T$ given by \eqref{gausselement}.
$\hfill \square$
\end{remark}

\begin{proof}[{Proof of \Cref{thm:nonunique-limits}}]
We verify the hypotheses of \Cref{thm:general-clt} 
applied with $\mathcal{B}_t \coloneqq \mathcal{Q}_t$
and $\hat{\mathcal{B}}_{t,N} \coloneqq  \hat{\mathcal{Q}}_{t,N}$ to demonstrate
\eqref{eq:joint-q-limit}. 
Subsequently, we use the Delta Theorem to show \eqref{eq:nonunique-limits}.

According to \Cref{lem:composition-is-hadamard}, 
 $\mathcal{Q}_t$ is Hadamard directionally differentiable at $V_{t+1}$. 
Combined with  $N^{1/2}(\hat{V}_{t+1,N} - V_{t+1}) \dst \mathfrak{G}_{t+1}$, the
Delta Theorem ensures
\begin{align}
\label{eq:Apr11110pm}
N^{1/2} \big[ \mathcal{Q}_t(\hat{V}_{t+1,N}) - \mathcal{Q}_t(V_{t+1}) \big]\dst \mathbb{E}_{\xi_t \sim P_t} [ \mathfrak{G}_{t+1}(F_t(\cdot, \cdot, \xi_t)) ].
\end{align}
Moreover, standard uniform CLTs ensure \eqref{eq:Apr12026}, 
which corresponds to \eqref{eq:general-intermediate-limit} in \Cref{thm:general-clt}.
Therefore, an application of \Cref{thm:general-clt} yields \eqref{eq:joint-q-limit}.

Now, we recall the definition of the infimum $G_t$ operator from \Cref{lem:Gt-Hadamard},
which is Hadamard directionally differentiable at
$\mathcal{Q}_t(V_{t+1})$, with the directional derivative given by (compare with \eqref{dirderiv})
\begin{equation*}
[G_t^\prime(\mathcal{Q}_{t}(V_{t+1}); \eta)](x_t) = \inf_{u_t \in \sU^*_t\big(\mathcal{Q}_{t}(V_{t+1}); x_t\big)} \eta(x_t, u_t).
\end{equation*}
Applying the functional Delta Theorem (see, e.g., \cite[Theorem 9.74]{SDR}), 
and the statistical limit in \eqref{eq:joint-q-limit},
\begin{align*}
N^{1/2}(\hat{V}_{t,N} - V_t) = N^{1/2} \big[ G_t\big(\hat{\mathcal{Q}}_{t,N}(\hat{V}_{t+1,N})\big) - G_t\big(\mathcal{Q}_t(V_{t+1})\big) \big] 
\dst G_t^\prime(\mathcal{Q}_{t}(V_{t+1}); \mathfrak{Z}_t).
\end{align*}
Substituting the definition of $\mathfrak{Z}_t$ yields the limit process $\mathfrak{G}_t$ defined in \eqref{eq:nonunique-limits}.
\end{proof}

\section{Proof of \Cref{lem:equicontinuity-equivalence}}
\label{sect:appendix-Random-empirical-processes}

\begin{proof}[{Proof of \Cref{lem:equicontinuity-equivalence}}]
Let $G_t \colon C(\mathcal{X}_t \times \mathcal{U}_t) \to C(\mathcal{X}_t)$ be the infimum operator defined by $[G_t(\phi)](x_t) \coloneqq \inf_{u_t \in \mathcal{U}_t} \phi(x_t, u_t)$ from 
\Cref{lem:Gt-Hadamard}. 
Using the definitions of the action-value functions from \eqref{eq:q-function}
and \eqref{eq:hatq-function},
and those of the dynamic programming operators \eqref{saaderiv-1} and
\eqref{saaderiv-2}, 
we have
 $\mathcal{T}_t = G_t \circ \mathcal{Q}_t$ and $\hat{\mathcal{T}}_{t,N} = G_t \circ \hat{\mathcal{Q}}_{t,N}$. 
In particular, 
\begin{align}
\label{eq:T-Q-expansion}
\begin{aligned}
\big[\hat{\mathcal{T}}_{t,N}
-
\mathcal{T}_{t}\big](\hat{V}_{t+1,N})
-
\big[
\hat{\mathcal{T}}_{t,N}- \mathcal{T}_t
\big]
(V_{t+1})
& =
G_t\big(\hat{\mathcal{Q}}_{t,N}(\hat{V}_{t+1,N})\big) - G_t\big(\mathcal{Q}_t(V_{t+1})\big)
\\
& \quad 
-
\big[G_t\big(\mathcal{Q}_t(\hat{V}_{t+1,N})\big) - G_t\big(\mathcal{Q}_t(V_{t+1})\big) \big]
\\
& \quad 
 - \big[G_t\big(\hat{\mathcal{Q}}_{t,N}(V_{t+1})\big) - G_t\big(\mathcal{Q}_t(V_{t+1})\big)\big].
\end{aligned}
\end{align}

By \Cref{ass-2}(iv), the set of minimizers $\sU^*_t(\mathcal{Q}_t(V_{t+1}); x_t)$ is a singleton $\{\pi^*_t(x_t)\}$. Combined with \Cref{lem:Gt-Hadamard}, 
we find that $G_t$ is Hadamard  differentiable at  $\mathcal{Q}_t(V_{t+1})$ with $[G_t'(\mathcal{Q}_t(V_{t+1}); \eta)](x_t) = \eta(x_t, \pi^*_t(x_t))$
for all $\eta \in C(\mathcal{X}_t \times \mathcal{U}_t)$.

\Cref{thm:nonunique-limits} ensures
 $N^{1/2}(\hat{\mathcal{Q}}_{t,N}(\hat{V}_{t+1,N}) - \mathcal{Q}_t(V_{t+1})) \dst \mathfrak{Z}_t$,
where $\mathfrak{Z}_t$ is given in \eqref{eq:joint-q-limit}. 
Moreover, 
 $N^{1/2}(\mathcal{Q}_t(\hat{V}_{t+1,N}) - \mathcal{Q}_t(V_{t+1}))
$
converges in distribution as shown in \eqref{eq:Apr11110pm}, 
and \eqref{eq:Apr12026} ensures the convergence in distribution of
 $N^{1/2}(\hat{\mathcal{Q}}_{t,N}(V_{t+1}) - \mathcal{Q}_t(V_{t+1}))$.
Since $G_t$ is Hadamard directionally differentiable at $\mathcal{Q}_t(V_{t+1})$, 
the Delta Theorem (see \cite[Theorem 9.74]{SDR}) ensures the asymptotic expansions
in $C(\mathcal{X}_t)$, 
\[
\begin{aligned}
G_t\big(\hat{\mathcal{Q}}_{t,N}(\hat{V}_{t+1,N})\big) - G_t\big(\mathcal{Q}_t(V_{t+1})\big) &= G_t'\big(\mathcal{Q}_t(V_{t+1}); \hat{\mathcal{Q}}_{t,N}(\hat{V}_{t+1,N}) - \mathcal{Q}_t(V_{t+1})\big) + o_p(N^{-1/2}), \\
G_t\big(\mathcal{Q}_t(\hat{V}_{t+1,N})\big) - G_t\big(\mathcal{Q}_t(V_{t+1})\big) &= G_t'\big(\mathcal{Q}_t(V_{t+1}); \mathcal{Q}_t(\hat{V}_{t+1,N}) - \mathcal{Q}_t(V_{t+1})\big) + o_p(N^{-1/2}), \\
G_t\big(\hat{\mathcal{Q}}_{t,N}(V_{t+1})\big) - G_t\big(\mathcal{Q}_t(V_{t+1})\big) &= G_t'\big(\mathcal{Q}_t(V_{t+1}); \hat{\mathcal{Q}}_{t,N}(V_{t+1}) - \mathcal{Q}_t(V_{t+1})\big) + o_p(N^{-1/2}).
\end{aligned}
\]
Combined with \eqref{eq:T-Q-expansion}, and the linearity and boundedness of
$G_t'(\mathcal{Q}_t(V_{t+1}); \cdot)$, we obtain in $C(\mathcal{X}_t)$, 
\begin{align*}
\begin{aligned}
& \big[\hat{\mathcal{T}}_{t,N}
-
\mathcal{T}_{t}\big](\hat{V}_{t+1,N})
-
\big[
\hat{\mathcal{T}}_{t,N}- \mathcal{T}_t
\big]
(V_{t+1})
\\
& \quad = 
G_t'\big(\mathcal{Q}_t(V_{t+1}); \big[\hat{\mathcal{Q}}_{t,N} - \mathcal{Q}_t\big](\hat{V}_{t+1,N}) - \big[\hat{\mathcal{Q}}_{t,N} - \mathcal{Q}_t\big](V_{t+1}) \big) + o_p(N^{-1/2}).
\end{aligned}
\end{align*}
By assumption, the inner term $\big[\hat{\mathcal{Q}}_{t,N} - \mathcal{Q}_t\big](\hat{V}_{t+1,N}) - \big[\hat{\mathcal{Q}}_{t,N} - \mathcal{Q}_t\big](V_{t+1})$ is $o_p(N^{-1/2})$
in $C(\mathcal{X}_t \times \mathcal{U}_t) $. Because the linear operator $G_t'(\mathcal{Q}_t(V_{t+1}); \cdot)$ is bounded, it preserves this rate, yielding the desired result.
\end{proof}

\section{Asymptotic expansions for SAA value functions and optimal values}

We now derive a first-order asymptotic expansion for the SAA value
functions and the optimal value.
Define $\widetilde{\mathcal{T}}_{t,N} \colon C(\mathcal{X}_{t+1}) \to C(\mathcal{X}_t)$ by
\begin{align*}
[\widetilde{\mathcal{T}}_{t,N} V](x_t)
\coloneqq 
\frac{1}{N}
\sum_{i=1}^{N}
\big[
f_t(x_t,\pi_t^*(x_t),\xi_{ti})
+
V\big(
F_t(x_t,\pi_t^*(x_t),\xi_{ti})
\big)
\big].
\end{align*}
For $t=T,\ldots,1$, define recursively
$
\widetilde{V}_{T+1,N}\coloneqq f_{T+1}
$
and
$\widetilde{V}_{t,N} = \widetilde{\mathcal{T}}_{t,N} \widetilde{V}_{t+1,N}$.

\begin{lemma}
\label{lem:asymptotic-expansion-saa-values}
Let \Cref{ass-1,ass-1',ass-2} hold, and suppose that
\eqref{eq:q-equicontinuity} is satisfied for all
$t \in \{1,\ldots,T-1\}$. Then, for
$t = 1, \ldots, T$,
$
\|\hat{V}_{t,N} - \widetilde{V}_{t,N}\|_\infty = o_p(N^{-1/2})
$.
In particular, for all $x_1 \in \mathcal{X}_1$,
\begin{align}
\label{eq:asymptotic-expansion-optimal-value}
\hat{V}_{1,N}(x_1)
=
\mathbb{E}_{\hat{P}_{1,N}\times\cdots\times\hat{P}_{T,N}}
\big[
\tsum_{t=1}^{T}
f_t(\boldsymbol{x}_t,\pi_t^*(\boldsymbol{x}_t),\xi_t)
+
f_{T+1}(\boldsymbol{x}_{T+1})
\big]
+
o_p(N^{-1/2}).
\end{align}
\end{lemma}

\begin{proof}
Throughout the proof,
we use the infimum operator 
$G_t$  from \Cref{lem:Gt-Hadamard}.
By \Cref{ass-2}(iv), $G_t$ is Hadamard differentiable at
$\mathcal{Q}_t(V_{t+1})$, with derivative
$
[G_t'(\mathcal{Q}_t(V_{t+1});\eta)](x_t)
=
\eta(x_t,\pi_t^*(x_t))
$.
Now, we proceed by backward induction on $t$.
For $t=T$, we have
$
\hat{V}_{T,N}
=
G_T\big(\hat{\mathcal{Q}}_{T,N}(f_{T+1})\big)
$,
and
$
V_T
=
G_T\big(\mathcal{Q}_T(f_{T+1})\big)
$.
Moreover, by the functional CLT used in the proof of \Cref{pr-1},
$
N^{1/2}\big[
\hat{\mathcal{Q}}_{T,N}(f_{T+1}) - \mathcal{Q}_T(f_{T+1})
\big]
\dst \mathfrak{Y}_T
$.
Hence, the Delta Theorem yields, in $C(\mathcal{X}_t)$, the asymptotic expansion
\[
\hat{V}_{T,N}
=
V_T
+
\big[
\hat{\mathcal{Q}}_{T,N}(f_{T+1}) - \mathcal{Q}_T(f_{T+1})
\big](\cdot,\pi_T^*(\cdot))
+
o_p(N^{-1/2}).
\]
Since
$
V_T(\cdot)
=
\mathcal{Q}_T(f_{T+1})(\cdot,\pi_T^*(\cdot))
$,
we obtain in $C(\mathcal{X}_t)$,
\[
\hat{V}_{T,N}
=
\widetilde{\mathcal{T}}_{T,N}f_{T+1}
+
o_p(N^{-1/2})
=
\widetilde{V}_{T,N}
+
o_p(N^{-1/2}),
\]
and therefore
$
\|\hat{V}_{T,N} - \widetilde{V}_{T,N}\|_\infty
=
o_p(N^{-1/2})
$.

Now fix $t \in \{1,\ldots,T-1\}$, and assume as induction hypothesis
that
$
\|\hat{V}_{t+1,N} - \widetilde{V}_{t+1,N}\|_\infty
=
o_p(N^{-1/2})
$.
We have
$
\hat{V}_{t,N}
=
G_t\big(\hat{\mathcal{Q}}_{t,N}(\hat{V}_{t+1,N})\big)
$
and
$
V_t
=
G_t\big(\mathcal{Q}_t(V_{t+1})\big)
$.

Since \eqref{eq:q-equicontinuity} holds, \Cref{thm:nonunique-limits}
implies that 
$
N^{1/2}\big[
\hat{\mathcal{Q}}_{t,N}(\hat{V}_{t+1,N}) - \mathcal{Q}_t(V_{t+1})
\big]
\dst \mathfrak{Z}_t
$.
Therefore, applying the Delta Theorem to $G_t$ at
$\mathcal{Q}_t(V_{t+1})$, we obtain in $C(\mathcal{X}_t)$,
\[
\hat{V}_{t,N}
=
V_t
+
\big[
\hat{\mathcal{Q}}_{t,N}(\hat{V}_{t+1,N}) - \mathcal{Q}_t(V_{t+1})
\big](\cdot,\pi_t^*(\cdot))
+
o_p(N^{-1/2}).
\]
Since
$
V_t(\cdot)
=
\mathcal{Q}_t(V_{t+1})(\cdot,\pi_t^*(\cdot))
$,
it follows that in $C(\mathcal{X}_t)$,
\[
\hat{V}_{t,N}
=
\widetilde{\mathcal{T}}_{t,N}\hat{V}_{t+1,N}
+
o_p(N^{-1/2}).
\]
Since $\widetilde{\mathcal{T}}_{t,N}$ is Lipschitz continuous
with Lipschitz  constant one, the induction hypothesis and
the above asymptotic expansion ensure
\[
\begin{aligned}
\|\hat{V}_{t,N} - \widetilde{V}_{t,N}\|_\infty
&\le
\|\hat{V}_{t,N} -
\widetilde{\mathcal{T}}_{t,N}\hat{V}_{t+1,N}\|_\infty
 +
\|\widetilde{\mathcal{T}}_{t,N}\hat{V}_{t+1,N}
-
\widetilde{\mathcal{T}}_{t,N}\widetilde{V}_{t+1,N}\|_\infty
\\
&\le
\|\hat{V}_{t,N} -
\widetilde{\mathcal{T}}_{t,N}\hat{V}_{t+1,N}\|_\infty
 +
\|\hat{V}_{t+1,N}
-
\widetilde{V}_{t+1,N}\|_\infty
=
o_p(N^{-1/2}),
\end{aligned}
\]
This proves
$
\|\hat{V}_{t,N} - \widetilde{V}_{t,N}\|_\infty
=
o_p(N^{-1/2})
$
for $t=1,\ldots,T$.

Finally, the asymptotic expansion in \eqref{eq:asymptotic-expansion-optimal-value} 
follows from the dynamic programming principle.
\end{proof}

\begin{remark}
\label{rem-variance}
\normalfont
The asymptotic expansion in
\eqref{eq:asymptotic-expansion-optimal-value}
is the counterpart of the corresponding formula in the one-stage
setting; see \cite[eq.\ (5.24)]{SDR}. We note that the expectation
in \eqref{eq:asymptotic-expansion-optimal-value} is taken with respect to
the product empirical measure. The expectation $\mathbb{E}_{\hat{P}_{1,N}\times\cdots\times\hat{P}_{T,N}}$ is an empirical counterpart of the respective expectation $\mathbb{E}_{P_1\times\cdots\times P_T}$. In both cases such expectation is performed iteratively  going backward in time. That is, for a measurable function
$Z \colon \Xi_1 \times \cdots \times \Xi_T \to \mathbb{R}$, the conditional expectation $\bbe_{\hat{P}_{T,N}}[Z|\xi_{[T-1]}]$ is $ \tfrac{1}{N}
\sum_{i=1}^N
Z(\xi_{[T-1]},\xi_{Ti})$.
By continuing this process going backward in time, we obtain 
\begin{equation}\label{empiricalexpect}
\mathbb{E}_{\hat{P}_{1,N}\times\cdots\times\hat{P}_{T,N}}
\big[
Z(\xi_1,\ldots,\xi_T)
\big]
=
\frac{1}{N^T}
\sum_{i_1,\ldots,i_T=1}^N
Z(\xi_{1i_1},\ldots,\xi_{Ti_T}).
\end{equation}
Hence this quantity averages $Z$ over all $N^T$ cross-stage
combinations of the samples, rather than over the $N$ diagonal samples
$(\xi_{1i},\ldots,\xi_{Ti})$. Therefore, the asymptotic variance of
$\mathfrak{G}_1(x_1)$ need not coincide with the variance of the usual
static estimator based on the diagonal samples
$(\xi_{1i},\ldots,\xi_{Ti})$. \Cref{exam-contr} shows that the former can
be strictly smaller than the latter.
\hfill $\square$
\end{remark}

\bibliography{CLT-SAA-SOC.bbl}

\begin{thebibliography}{10}

\bibitem{Bertsekas2005}
D.~P. Bertsekas.
\newblock {\em Dynamic programming and optimal control. {Vol}. 1.}
\newblock Athena Scientific, Belmont, MA, 3rd edition, 2005.

\bibitem{Bertsekas1975}
D. Bertsekas.
\newblock Convergence of discretization procedures in dynamic programming.
\newblock {\em IEEE Transactions on Automatic Control}, 20(3):415--419, 1975.
\newblock \href {https://doi.org/10.1109/TAC.1975.1100984}
  {\path{doi:10.1109/TAC.1975.1100984}}.

\bibitem{BS2000}
J.~F. Bonnans and A. Shapiro.
\newblock {\em Perturbation {A}nalysis of {O}ptimization {P}roblems}.
\newblock Springer Ser. Oper. Res. Springer, New York, 2000.
\newblock \href {https://doi.org/10.1007/978-1-4612-1394-9}
  {\path{doi:10.1007/978-1-4612-1394-9}}.

\bibitem{ding2019}
L. Ding, S. Ahmed, and A. Shapiro.
\newblock A {P}ython package for multi-stage stochastic programming.
\newblock {\em Optimization online}, 2019.
\newblock URL:
  \url{http://www.optimization-online.org/DB_FILE/2019/05/7199.pdf}.

\bibitem{Eichhorn2007}
A. Eichhorn and W. R{\"o}misch.
\newblock Stochastic integer programming: {L}imit theorems and confidence
  intervals.
\newblock {\em Math. Oper. Res.}, 32(1):118--135, 2007.
\newblock \href {https://doi.org/10.1287/moor.1060.0222}
  {\path{doi:10.1287/moor.1060.0222}}.

\bibitem{Kleywegt2002}
A.~J. Kleywegt, A. Shapiro, and T. Homem-de Mello.
\newblock The sample average approximation method for stochastic discrete
  optimization.
\newblock {\em SIAM J. Optim.}, 12(2):479--502, 2002.
\newblock \href {https://doi.org/10.1137/S1052623499363220}
  {\path{doi:10.1137/S1052623499363220}}.

\bibitem{Kosorok2008}
M.~R. Kosorok.
\newblock {\em Introduction to empirical processes and semiparametric
  inference}.
\newblock Springer Series in Statistics. Springer, New York, 2008.
\newblock \href {https://doi.org/10.1007/978-0-387-74978-5}
  {\path{doi:10.1007/978-0-387-74978-5}}.

\bibitem{Milz2025}
J. Milz.
\newblock Supplementary code for the manuscript: {C}entral limit theorems for
  sample average approximations in stochastic optimal control, August 2025.
\newblock \href {https://doi.org/10.5281/zenodo.16733442}
  {\path{doi:10.5281/zenodo.16733442}}.

\bibitem{Milz2026}
J. Milz, A. Shapiro, and E. Zhou.
\newblock Stochastic optimal control with side information and {B}ayesian
  learning, 2026.
\newblock URL: \url{https://arxiv.org/abs/2602.22047}, \href
  {http://arxiv.org/abs/2602.22047} {\path{arXiv:2602.22047}}.

\bibitem{Newey1991}
W.~K. Newey.
\newblock Uniform convergence in probability and stochastic equicontinuity.
\newblock {\em Econometrica}, 59(4):1161--1167, 1991.
\newblock \href {https://doi.org/10.2307/2938179} {\path{doi:10.2307/2938179}}.

\bibitem{Pollard1990}
D. Pollard.
\newblock {\em Empirical processes: {T}heory and applications}, volume~2 of
  {\em NSF-CBMS Regional Conference Series in Probability and Statistics}.
\newblock Institute of Mathematical Statistics, Hayward, CA; American
  Statistical Association, Alexandria, VA, 1990.
\newblock \href {https://doi.org/10.1214/cbms/1462061091}
  {\path{doi:10.1214/cbms/1462061091}}.

\bibitem{sha-91}
A. Shapiro.
\newblock Asymptotic analysis of stochastic programs.
\newblock {\em Annals of Operations Research}, 30:169--186, 1991.
\newblock \href {https://doi.org/10.1007/BF02204815}
  {\path{doi:10.1007/BF02204815}}.

\bibitem{Shapiro2006}
A. Shapiro.
\newblock On complexity of multistage stochastic programs.
\newblock {\em Oper. Res. Lett.}, 34(1):1--8, 2006.
\newblock \href {https://doi.org/10.1016/j.orl.2005.02.003}
  {\path{doi:10.1016/j.orl.2005.02.003}}.

\bibitem{shapnem:05}
A. Shapiro and A. Nemirovski.
\newblock On complexity of stochastic programming problems.
\newblock In V. Jeyakumar and A. Rubinov, editors, {\em Continuous
  Optimization: Current Trends and Applications}, pages 111--144. Springer,
  2005.
\newblock \href {https://doi.org/10.1007/0-387-26771-9_4}
  {\path{doi:10.1007/0-387-26771-9_4}}.

\bibitem{STCS2013}
A. Shapiro, W. Tekaya, J.~P. da~Costa, and M.~P. Soares.
\newblock Risk neutral and risk averse stochastic dual dynamic programming
  method.
\newblock {\em European Journal of Operational Research}, 224:375--391, 2013.
\newblock \href {https://doi.org/10.1016/j.ejor.2012.08.022}
  {\path{doi:10.1016/j.ejor.2012.08.022}}.

\bibitem{cltshapcheng21}
A. Shapiro and Y. Cheng.
\newblock Central limit theorem and sample complexity of stationary stochastic
  programs.
\newblock {\em Oper. Res. Lett.}, 49(5):676--681, 2021.
\newblock \href {https://doi.org/10.1016/j.orl.2021.06.019}
  {\path{doi:10.1016/j.orl.2021.06.019}}.

\bibitem{SDR}
A. Shapiro, D. Dentcheva, and A. Ruszczy{\'{n}}ski.
\newblock {\em Lectures on {S}tochastic {P}rogramming: {M}odeling and
  {T}heory}.
\newblock MOS-SIAM Ser. Optim. SIAM, Philadelphia, PA, 3rd edition, 2021.
\newblock \href {https://doi.org/10.1137/1.9781611976595}
  {\path{doi:10.1137/1.9781611976595}}.

\bibitem{Song1993}
J.-S. Song and P. Zipkin.
\newblock Inventory control in a fluctuating demand environment.
\newblock {\em Operations Research}, 41(2):351–370, 1993.
\newblock \href {https://doi.org/10.1287/opre.41.2.351}
  {\path{doi:10.1287/opre.41.2.351}}.

\bibitem{Vaart2023}
A.~W. van~der Vaart and J.~A. Wellner.
\newblock {\em Weak {C}onvergence and {E}mpirical {P}rocesses. {With}
  {A}pplications to {S}tatistics}.
\newblock Springer Ser. Stat. Cham: Springer, 2nd edition, 2023.
\newblock \href {https://doi.org/10.1007/978-3-031-29040-4}
  {\path{doi:10.1007/978-3-031-29040-4}}.

\bibitem{van2000}
A.~W. Van~der Vaart.
\newblock {\em Asymptotic {S}tatistics}, volume~3.
\newblock Cambridge University Press, 2000.
\newblock \href {https://doi.org/10.1017/CBO9780511802256}
  {\path{doi:10.1017/CBO9780511802256}}.

\bibitem{Zhang2025}
X. Zhang, Z.-S. Ye, and W.~B. Haskell.
\newblock Error propagation in asymptotic analysis of the data-driven (s, {S})
  inventory policy.
\newblock {\em Operations Research}, 73(1):1--21, 2025.
\newblock \href {https://doi.org/10.1287/opre.2020.0568}
  {\path{doi:10.1287/opre.2020.0568}}.

\bibitem{zipkin}
P. Zipkin.
\newblock {\em Foundation of inventory management}.
\newblock McGraw-Hill, Boston, MA, 2000.

\end{thebibliography}

\end{document}